\newtheorem{theorem}{Theorem}
\newtheorem{proposition}{Proposition}
\newtheorem{corollary}{Corollary}
\newtheorem{definition}{Definition}
\newtheorem{remark}{Remark}
\newtheorem*{lemma*}{Lemma}
\def\Z{\mathbb{Z}}
\def\R{\mathbb{R}}
\def\C{\mathbb{C}}
\begin{document}

\title[Loci with fully degenerate Lyapunov spectrum]{Loci in strata of meromorphic differentials with fully degenerate Lyapunov spectrum}
\author{J. Grivaux}

\address{LATP, Universit\'e d'Aix-Marseille, 39 rue F. Joliot-Curie, 13453 Marseille Cedex 20
} 
  
\email{julien.grivaux@univ-amu.fr}

\author{P. Hubert}

\address{LATP, Universit\'e d'Aix-Marseille, 39 rue F. Joliot-Curie, 13453 Marseille Cedex 20}

\email{pascal.hubert@univ-amu.fr}

\subjclass[2010]{Primary: 30F60, 32G15, 32G20; Secondary: 37H15} \keywords{}

\maketitle

\begin{abstract}
We construct explicit closed $\mathrm{GL}(2; \mathbb{R})$-invariant loci in strata of meromorphic differentials of arbitrary large dimension with fully degenerate Lyapunov spectrum. This answers a question of Forni-Matheus-Zorich.
\end{abstract}

\tableofcontents

\section{Introduction}

Lyapunov exponents of the Teichm\"uller flow have been studied a lot since the work of Zorich (\cite{Zo2}, \cite{Zo3}) and Forni \cite{Fo}. Their understanding is important for applications to the dynamics of interval exchange transformations and polygonal billiards. A big breakthrough is the Eskin-Kontsevich-Zorich formula for the sum of positive Lyapunov exponents \cite{grandEKZ}. 
Given a $\mathrm{SL}(2;\mathbb{R})$ invariant suborbifold of a stratum of quadratic differentials,
they relate the sum $\lambda_1 + \dots + \lambda_g$ to the Siegel-Veech constant of the invariant locus\footnote{For quadratic differentials, two bundles can be considered. In this article, we will only be interested in the bundle with fiber $H^1(X,\mathbb{R})$ over a Riemann surface $X$.  The Lyapunov exponents  of this bundle are often denoted by $\lambda_1^+,  \dots,  \lambda_g^+ $.}.
\par \medskip
By  a theorem of Kontsevich and Forni, the sum  $\lambda_1 + \dots + \lambda_g$ is also the integral over the invariant locus of the curvature of the Hodge bundle along Teichm\"uller discs (\cite{Fo}, \cite{grandEKZ}).  
Using this interpretation,  every Lyapunov exponent is computed for cyclic covers of the sphere branched over 4 points
(\cite{petitEKZ}, \cite{FMZ}, see also \cite{BM}, and \cite{W} for abelian covers). For some cyclic covers, Forni-Matheus-Zorich remarked that the sum $\lambda_1 + \dots + \lambda_g$ is equal to zero \cite[Thm. 35]{FMZ}.  This surprizing fact means that the complex structure of the underlying Riemann surface is constant along the Teichm\"uller disc. Forni-Matheus-Zorich ask whether it is possible to construct other invariant loci with this property (see \cite[p. 312]{FMZ}).
The content of this article is to give a simple explanation of the  phenomenon discovered by Forni-Matheus-Zorich. We prove: 
\begin{theorem} \label{thm:yes}
There exist closed $\mathrm{GL}(2;\mathbb{R})$ invariant loci of quadratic differentials of arbitrarily large dimension with zero Lyapunov exponents.
\end{theorem}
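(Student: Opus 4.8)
The plan is to translate the vanishing of the Lyapunov spectrum into a rigidity statement for a sub-variation of Hodge structure, and then to exhibit explicit families for which that rigidity holds by construction. First I would set up the standard dictionary: a quadratic differential $q$ on $X$ has a canonical orienting double cover $\pi\colon\hat X\to X$ with involution $\tau$ and $\pi^*q=\omega^2$ for a (meromorphic) abelian differential $\omega$, and the cohomology splits $\tau$-equivariantly as $H^1(\hat X;\R)=H^1_+\oplus H^1_-$ with $H^1_+\cong H^1(X;\R)$. The exponents named $\lambda_1^+\ge\dots\ge\lambda_{g^+}^+$ in the footnote are those of the Kontsevich--Zorich cocycle restricted to the symplectic subbundle $H^1_+$. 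The essential structural point, which I would stress at the outset, is that the tautological plane $\langle\mathrm{Re}\,\omega,\mathrm{Im}\,\omega\rangle$ sits in the \emph{anti}-invariant part $H^1_-$; hence no $\lambda_i^+$ is pinned to the value $1$, and the full $+$-spectrum has room to vanish.

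Second, I would invoke the Forni--Kontsevich curvature formula recalled in the introduction: $\lambda_1^++\dots+\lambda_{g^+}^+$ equals, up to a positive normalisation, the average over the Teichm\"uller disc of a nonnegative integrand measuring the second fundamental form of the Hodge subbundle $H^{1,0}_+\subset H^1_+\otimes\C$, equivalently the derivative of the period map of the variation of Hodge structure carried by $H^1_+$. As the $+$-spectrum is symplectically symmetric, the $\lambda_i^+$ may be taken nonnegative, so the sum vanishes iff each term vanishes, which occurs exactly when this integrand is identically zero, i.e. when the Hodge structure on $H^1_+$ is \emph{constant} along every disc of the locus. The theorem is thus reduced to constructing closed $\GL(2;\R)$-invariant loci, of unbounded dimension, along which the invariant sub-VHS $H^1_+$ is flat.

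Third, for the construction I would produce covers carrying a finite symmetry group, decompose the Hodge bundle into isotypic local subsystems, and force the invariant summand $H^1_+$ to be a constant sub-VHS. The mechanism is that the Teichm\"uller deformation is governed by the Beltrami class $\bar q/|q|$, whose cup-product action must carry the invariant holomorphic forms $H^{1,0}_+$ back into themselves; this fails for a generic \emph{holomorphic} $q$ on a positive-genus quotient, which is exactly why one passes to strata of \emph{meromorphic} differentials, where the pole orders make the relevant period derivatives degenerate. Starting from the Forni--Matheus--Zorich cyclic covers as the rank-one building blocks, I would reach arbitrarily large dimension by allowing many free parameters (branch-point configurations) or by forming fiber products or disjoint assemblies of such blocks, so that both the genus of the quotient and the dimension of the locus grow without bound while the flatness of the invariant part is checked isotypic piece by isotypic piece.

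The main obstacle, and the heart of the matter, is (i) to verify that the invariant Hodge subbundle is genuinely parallel along \emph{every} Teichm\"uller disc in the locus --- a vanishing of period derivatives that must be extracted from the explicit group action and pole structure, not merely assumed --- while simultaneously (ii) keeping the dimension unbounded and (iii) ensuring the locus is closed and $\GL(2;\R)$-invariant inside an honest stratum of meromorphic (quadratic) differentials. I expect (i) to be the delicate step: it is precisely the input that makes the Forni--Kontsevich integrand vanish identically, and it is where the special algebraic geometry of the chosen covers is indispensable. Once (i) is in place, the dimension bookkeeping of (ii) and the closedness of (iii) should follow from standard properties of strata of covers.
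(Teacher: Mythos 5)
Your reduction of the theorem to the flatness of the invariant sub-variation of Hodge structure along every Teichm\"uller disc is sound and matches the paper's Proposition \ref{prop:forget} (the equivalence between vanishing of all exponents, vanishing of the Forni form on $\Omega(X)$, and constancy of the image under the forgetful map $\mathcal{T}_{g,n}\rightarrow\mathcal{T}_g$, via infinitesimal Torelli). The observation that the tautological plane lives in $H^1_-$, so that no $\lambda_i^+$ is pinned to $1$, is also correct and relevant. But the construction itself --- the part you yourself flag as ``the heart of the matter'' --- is missing, and the two mechanisms you sketch for reaching arbitrarily large dimension do not work as stated. Fiber products or assemblies of Forni--Matheus--Zorich cyclic covers of $\mathbb{P}^1$ branched over four points still produce loci parametrized by the cross-ratio of the four branch points, i.e.\ Teichm\"uller curves: the genus grows but the dimension of the invariant locus does not. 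Conversely, ``allowing many free branch-point configurations'' is self-defeating: if the extra free points are branch points of the cover, moving them deforms the complex structure of the covering surface, and the Hodge structure is no longer constant --- precisely the flatness you need in step (i) fails.

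The key idea in the paper resolves both difficulties at once and makes your delicate step (i) trivial. Fix once and for all a covering $\pi\colon Y\rightarrow\mathbb{P}^1$ branched over \emph{exactly three} points $0,1,\infty$, and consider the locus of pullbacks $\pi^*q$ where $q$ ranges over meromorphic quadratic differentials on $\mathbb{P}^1$ with simple poles at $0,1,\infty$ and at $k-3$ further points $x_1,\ldots,x_{k-3}$ at which $\pi$ is \emph{unramified} (these are poles of $q$, not branch points of $\pi$). Since $\mathrm{PGL}(2;\mathbb{C})$ acts transitively on triples, the $\mathrm{GL}(2;\mathbb{R})$-action only moves the auxiliary points $x_i$ and the zeroes of $q$; the covering surface $Y$ never changes, so the image of the whole locus in $\mathcal{M}_g$ is the single point $\{Y\}$ and the vanishing of the exponents follows from your own reduction with nothing further to check --- no isotypic decomposition is needed. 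The dimension of the locus is $r+k-2$ and grows without bound as one increases the number $k$ of poles of $q$ on the base (consistently with the lower bound $n\geq 2g-2$ on the number of poles from Proposition \ref{prop:ekzz}), while closedness and $\mathrm{GL}(2;\mathbb{R})$-invariance come from the fact that the locus is the image of an affine-in-period-coordinates immersion of a stratum over $\mathbb{P}^1$. Without this specific choice of covers branched over at most three points, your outline does not yield a proof.
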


This result can be interpreted in the following way: the projection of such a locus  to the moduli space of {\it compact} Riemann surfaces is constant. Remark that the situation for strata of \textit{abelian} differentials is completely different: there are finitely many invariant suborbifolds with fully degenerate Lyapunov spectrum (meaning in this setting that all exponents are zero except $\lambda_1$ which is $1$), and they are arithmetic Teichm\"uller curves (see \cite{M}, \cite{Fo2}, \cite{FMZ} and \cite{A}).
\par \medskip
\textbf{Acknowledgements} We thank John Hubbard, Howard Masur and Christopher Leininger for helpful discussions. We also thank heartily Dmitri Zvonkine for sharing a very valuable idea.
\section{Background material}
\subsection{The Teichm\"uller flow for translation surfaces} $ $
\par
A translation surface is a pair $(X, \omega)$ where $X$ is a compact Riemann surface and $\omega$ is a holomorphic one-form on $X$. If $S(\omega)$ if the set of the zeroes of $\omega$, there exists an open covering of $\tilde{X}=X\setminus S(\omega)$ and holomorphic charts $\varphi_i \colon U_i \rightarrow X$ such that $\varphi_i^* \omega=dz$ for all $i$. For such an atlas, the transition functions are translations.
The form $\omega$ induces a flat metric $|\omega^2|$ on the open surface $\tilde{X}$, whose area is the integral $\frac{i}{2} \int_X \omega \wedge \bar{\omega}$. 
We could have taken meromorphic $1$-forms instead of holomorphic ones, but in that case the area of the surface is never finite. 
\par \medskip
There is a natural action of $\mathrm{GL}(2, \R)$ on translation surfaces given as follows: first we pick an atlas of charts of $\tilde{X}$ where all transitions functions are translations by some complex vectors $v_{ij}$ which we will consider as vectors in $\R^2$. Then, for any $M$ is $\mathrm{GL}(2, \R)$, we get an open surface $\tilde{X}_M$ defined by an atlas whose transition functions are translations by $M. v_{ij}$. This surface is diffeomorphic to $\tilde{X}_M$. Therefore, we can fill the holes and extend the complex structure in a unique way: the result is a compact Riemann surface $X_M$ diffeomorphic to $X$ endowed with a meromorphic differential $\omega_M$ of finite volume, hence holomorphic. The action of $\mathrm{GL}(2; \R)$ is defined by the formula $M. (X, \omega)=(X_M, \omega_M)$. The action of $\mathrm{SL}(2; \R)$ preserves the volume of translation surfaces.
\par \medskip
The subgroup of $\mathrm{SL}(2; \R)$ of matrices $M$ such that $M. (X, \omega)=(X, \omega)$ up to diffeomorphism is called the Veech group of $(X, \omega)$. If $M_t=\begin{pmatrix}
e^t & 0\\
0 & e^{-t} 
\end{pmatrix}$
then the curve $(X_t, \omega_t):=M_t.(X, \omega)$ is called the orbit under the Teichm\"uller flow of $(X, \omega)$. If $M_{\theta}$ is a rotation in $\mathrm{SO}(2, \R)$, then $M_{\theta} .(X, \omega)=(X, e^{i \theta} \omega)$. 
\par \medskip
If we fix multiplicities $(m_1, \ldots, m_r)$ such that $\sum_{i=1}^r m_i=2g-2$, the stratum of translation surfaces $\mathcal{H}(m_1, \ldots, m_r)$ is the set of translations surfaces $(X, \omega)$ where $\omega$ has $r$ distinct zeroes of multiplicities $m_1, \ldots, m_r$ modulo diffeomorphism. The normalized stratum $\mathcal{H}_1(m_1, \ldots, m_r)$ is the locus of flat surfaces with unit area in $\mathcal{H}(m_1, \ldots, m_r)$, and the projective stratum $\mathrm{P} \mathcal{H}(m_1, \ldots, m_r)$ is obtained by taking the quotient of $\mathcal{H}(m_1, \ldots, m_r)$ under the natural $\C^{\times}$-action on forms. Strata and projective strata are complex orbifolds of respective dimensions dimensions $2g+r-1$ and $2g+r-2$ if $g \geq 2$.
\par \medskip
There are standard coordinates on the stratum $\mathcal{H}(m_1, \ldots, m_r)$, called period coordinates. Fix $(X, \omega)$ in this stratum, and let $A_1, \ldots, A_g, \mathrm{B}_1, \ldots, \mathrm{B}_g$ be a symplectic basis of $\mathrm{H}_1(X, \Z)$ and $C_1, \ldots, C_{r-1}$ be $r-1$ paths joining a zero of $\omega$ to all the $r-1$ other zeroes. The map
\[
(X, \omega) \rightarrow \left( \int_{A_1} \omega, \ldots, \int_{A_g} \omega, \int_{\mathrm{B}_1} \omega, \ldots, \int_{\mathrm{B}_g} \omega,  \int_{C_1} \omega, \ldots, \int_{C_{r-1}} \omega \right)
\]
yields an orbifold chart on $\mathcal{H}(m_1, \ldots, m_r)$. These charts allow to define a canonical volume element on $\mathcal{H}(m_1, \ldots, m_r)$, $\mathcal{H}_1(m_1, \ldots, m_r)$, and $\mathrm{P} \mathcal{H} (m_1, \ldots, m_r)$. By classical results of Masur and Veech, connected components of projective strata have finite volume.
\par \medskip
Let $\mathfrak{\mathbb{H}}=\mathrm{SL}(2; \R)/\mathrm{SO}(2)$ denote the Poincar\'e upper-half plane. For any $(X, \omega)$ in a projective stratum, the $\mathrm{SL}(2; \R)$-action factorizes to a holomorphic map 
\[
\mathbb{H} \rightarrow \mathrm{P} \mathcal{H}(m_1, \ldots, m_r)
\]
which is an immersion. The image of this map is called a Teichm\"uller disc, it is stable under the Teichm\"uller flow. Besides, Teichm\"uller discs induce a smooth foliation with holomorphic leaves on $\mathrm{P} \mathcal{H}(m_1, \ldots, m_r)$.
\par \medskip
Assume that the Veech group $\Gamma$ of $(X, \omega)$ is a lattice in $\mathrm{SL}(2; \R)$. Then the image $\mathbb{H} / \Gamma$ of the corresponding Teichm\"uller disc in the projective stratum is called a Teichm\"uller curve.
\par \medskip
All these considerations generalize to the so-called half-translation surfaces, which are pairs $(X, q)$ where $q$ is a quadratic holomorphic (for the time being) differential on $X$. The transitions functions of a well-choosen atlas of charts on the open surface are half translations, that is either translations or flips. The area of the flat metric on $\tilde{X}$ is $\frac{1}{2} \int_X |q|$, and we still have an action of $\mathrm{GL}(2, \R)$ as well as a Teichm\"uller flow. The period coordinates on strata of quadratic differentials are obtained as follows: for any $(X, q)$ in a stratum, we take the twofold branched covering $\mathfrak{p} \colon \widehat{X} \rightarrow X$ given by the holonomy representation of $q$, which is given by a morphism from $\pi_1(X)$ to $\mathbb{Z}/2\mathbb{Z}$. Let $j$ be the corresponding involution acting on $\widehat{X}$. The quadratic differential $\mathfrak{p}^*q$ is the square of an abelian differential $\omega$. The period coordinates of $(X, q)$ are obtained by taking $J$-anti-invariant absolute and relative periods of $(X, q)$.
\par \medskip
However, a major difference happens for quadratic differentials: it is possible to take meromorphic quadratic differentials and still get finite volume for the corresponding flat surface. More precisely, $(X, q)$ has finite volume if and only if $q$ has poles of order at most one. Therefore we have strata, normalized strata and projective strata $\mathcal{Q}(m_1, \ldots, m_r)$,  $\mathcal{Q}_1(m_1, \ldots, m_r)$ and $\mathrm{P}\mathcal{Q}(m_1, \ldots, m_r)$, where $\sum_{i=1}^r m_i=4g-4$ and each $m_i$ is either positive or equal to $-1$.

\par \medskip
Let ${S}$ be a finite subset of $X$ with cardinal $n$, so that $(X, {S})$ gives a point in the marked Teichm\"uller space $\mathcal{T}_{g, n}$ (genus $g$ with $n$ marked points). The cotangent space of $\mathcal{T}_{g, n}$ at $X$ is exactly the space $\mathcal{Q}_S(X)$ of holomorphic quadratic differentials on $X \setminus S$ with poles of order at most one on $S\!$. There is a norm on $\mathcal{Q}_S(X)$ given by $||q||=\int_X |q|$, as well as a dual norm on $\mathcal{Q}_S(X)^*$. The corresponding distance on $\mathcal{T}_{g, n}$ is the Teichm\"uller metric.
\par \medskip
Let us fix $(X, S)$ as well as an element $q$ in $\mathcal{Q}_S(X)$. There is a complex linear form $\mu_q$ on $\mathcal{Q}_S(X)$ given by scalar product with the $\mathrm{L}^{\infty}$ Beltrami differential $\dfrac{|q|}{q}$:
\[
\mu_q(\tilde{q})=\int_X \tilde{q}\,\dfrac{|q|}{q} \cdot
\] 
Note that $\mu_q(q)=\int_X |q| >0$ so that $\mu_q$ is nonzero. Besides, we have $||\mu_q||=1$. The map $q \rightarrow \mu_q$ gives a non-linear isomorphism between the unit spheres of $\mathcal{Q}_S(X)$ and $\mathcal{Q}_S(X)^*$, hence between the unitary cotangent space $\mathrm{U}^* \mathcal{T}_{g, n}$ and the unitary tangent space $\mathrm{U} \mathcal{T}_{g,n}$. 
\par \medskip
If $(X, q)$ is given and $S$ is the set of poles of $q$, the Teichm\"uller flow of $(X, q)$ introduced formerly is the geodesic flow (for the Teichm\"uller metric) on $\mathcal{T}_{g, n}$ starting from $X$ in the direction $\mu_q$.

\subsection{The period mapping} $ $
\par For any compact Riemann surface $X$, $\mathrm{H}^1(X, \C)$ is the orthogonal sum (for the intersection form) of $\Omega(X)$ and $\overline{\Omega(X)}$. Besides, the composition 
\[
\psi: \mathrm{H}^1(X, \R) \rightarrow \mathrm{H}^1(X, \C) \xrightarrow{\mathrm{pr}_1} \Omega(X)
\] 
is an isomorphism. The Hodge norm $|| \, \, ||_{\textrm{Hodge}}$ is the unique norm on $\mathrm{H}^1(X, \R)$ making $\psi$ an isometry.
\par \medskip
Let us now consider a local holomorphic family of curves, that is a proper holomorphic submersion $\pi \colon \mathfrak{X} \rightarrow B$ whose fibers are compact Riemann surfaces of some genus $g$, where $B$ is a small ball in $\C^n$. The Hodge bundle is a holomorphic vector bundle on $B$ of rang $g$ whose fiber at each point $b$ is the vector space $\Omega(X_b)$. The local system $\mathrm{R}^1 \pi_* \,\R_{\mathfrak{X}}$ is trivial, which means that we can canonically identify all the vector spaces $\mathrm{H}^1(\mathfrak{X}_b, \R)$ to some fixed real vector space $\mathbb{V}$ of dimension $2g$. The local period map 
\[
\xi \colon B \rightarrow \mathrm{Gr}(g, \mathbb{V}^{\C})
\] 
associates to any $b$ the subspace $\mathcal{H}_b$ in the Grassmannian of $g$-dimensional complex subspaces of $\mathbb{V}^{\C}$. The derivative of $\xi$ at a point $b$ in $B$ is a linear map from $\mathrm{T}^{1, 0}_b B$ to $\mathrm{End}\,(\mathcal{H}_b, \mathbb{V}^{C}/\mathcal{H}_b)$, which is isomorphic to $\mathrm{End}\,(\mathcal{H}_b, \overline{\mathcal{H}}_b)$.
\par \medskip
The differential of $\xi$ can be explicitely computed: $\xi$ induces a classifying map $\xi_{\mathrm{Teich}} : B \rightarrow \mathcal{T}_g$. Then we have the following formula due to Ahlfors: for any vector $v$ in $\mathrm{T}_b B$ and any elements $\alpha$ and $\beta$ in $\Omega(X_b)$, 
\[
(\beta, \overline{\xi'_v(\alpha)})=\int_X \alpha \otimes \beta . \,\, \xi'_{\mathrm{Teich}}(v).
\]
In this formula, $\xi'_{\mathrm{Teich}}(v)$ is a tangent vector to $\mathcal{T}_g$, hence represented by a Beltrami differential which is a tensor field on $X$ of type $(-1, 1)$. Thus, the integrand in the above formula of type $(2, 0)+(-1, 1)=(1, 1)$. We can also think of $\xi'_{\mathrm{Teich}}(v)$ as a linear form on $\mathcal{Q}(X)$; in this case the above formula reads 
\[
(\beta, \overline{\xi'_v(\alpha)})=\xi'_{\mathrm{Teich}}(v)\,  \{\alpha \otimes \beta\}.
\]

\par \medskip
It is also possible to give another interpretation on $\xi'$. For this we consider the exact sequence of holomorphic vector bundles
\[
0 \rightarrow \mathcal{H} \rightarrow \mathbb{V} \otimes \mathcal{O}_B \rightarrow \overline{\mathcal{H}} \rightarrow 0.
\]
The bundle $\mathbb{V} \otimes \mathcal{O}_B$ carries a natural flat connection (the Gau\ss-Manin connection), but $\mathcal{H}$ is not in general a flat sub-bundle of $\mathbb{V} \otimes \mathcal{O}_B$. A precise way to measure this (see  formula \eqref{courbure} below) is the second fundamental form $\sigma$ associated with this exact sequence and the Gau\ss-Manin connexion; it is a $(1,0)$-form with values in $\mathrm{Hom}\,(\mathcal{H}, \overline{\mathcal{H}})$. A simple calculation shows that 
\begin{equation} \label{comparaison}
\sigma=\xi'.
\end{equation}
\par \medskip
The Hodge bundle $\mathcal{H}$ carries a natural metric given by the intersection form, its curvature form is given by the formula
\begin{equation} \label{courbure}
\Theta_{\mathcal{H}}=\sigma^* \wedge \sigma.
\end{equation}
By "$\wedge$" we mean composition on the fiber and wedge-product on the base. In particular, $i \,\mathrm{Tr} \Theta_{\mathcal{H}}$ is a positive $(1, 1)$-form on $B$.
\par \medskip
For any compact half-translation surface $(X, q)$, Forni's B-form is a bilinear form on $\Omega(X)$ defined by
\[
\mathrm{B}_q(\alpha, \beta)=\int_X \frac{\alpha \otimes \beta}{q}\, |q| \cdot
\]
If $\xi'_{\mathrm{Teich}}(v)$ has unit norm, we can write it as $\mu_q$ for some holomorphic quadratic differential on $X$. Then we have $(\beta, \overline{\xi'_v(\alpha)})=\mathrm{B}_{q}(\alpha, \beta)$. In case of a Teichm\"uller orbit $(X_t, q_t)$, if we differentiate along the vector field $\frac{\partial}{\partial_t}$, we get the formula
\begin{equation} \label{clef}
(\beta, \overline{\xi'_{\partial_t}(\alpha)})=\mathrm{B}_{q_t}(\alpha, \beta).
\end{equation}
Applying Cauchy-Schwarz inequality, $|\mathrm{B}_q(\alpha, \beta)| \leq ||\alpha|| \times ||\beta||$ with equality if and only if there exists a holomorphic one-form $\omega$ and two complex constants $c$ and $c'$ such that $q=\omega^2$, $\alpha=c \omega$ and $\beta=c' \omega$. In particular, if $q$ is meromorphic with simples poles, $||| \mathrm{B}_q ||| <1$.
\par \medskip
We recall now Forni's inequality: let $(X, q)$ be a half-translation surface, $(X_t, q_t)$ be its orbit under the Teichm\"uller flow, $v$ be in $\mathrm{H}^1(X, \R)$ and $t \rightarrow v_t$ be its parallel transport under the Teichm\"uller flow for the Gau\ss-Manin connection. We write $v_t=\chi_t+ \overline{\chi_t}$ where $\chi_t$ is in $\Omega(X_t)$. Then a simple calculation gives
\[
\partial_t \, ||v_t||_{\textrm{Hodge}}=\mathrm{B}_{q_t}(\chi_t, \chi_t).
\]
Combined with the inequality $|||B||| \leq 1$, this gives Forni's inequality
\begin{equation} \label{Forni}
\left|\,\partial_t \,  \{\log ||v_t||_{\,\textrm{Hodge}}\} \, \right| \leq 1.
\end{equation}
\subsection{Lyapunov exponents of the KZ cocycle.}$ $
\par
The parallel transport for the Gau\ss-Manin connection of vectors of $\mathrm{H}^1(X, \R)$ under the Teichm\"uller flow is called the Kontsevich-Zorich cocycle. Recall that the Teichm\"uller flow is ergodic on every connected component $\mathfrak{D}_1$ of the normalized stratum $\mathcal{Q}_1(m_1, \ldots, m_r)$. By Osseledet's theorem, it is possible to associate $2g$ Lyapunov exposants to this cocycle. 
\par \medskip
Forni's inequality (\ref{Forni}) implies that the KZ cocycle is $\log$-integrable, so that the Lyapunov exponents are well-defined. Since the cocycle is symplectic, the Lyapunov spectrum is of the form $\{-\lambda_{1}, -\lambda_{2}, \ldots, -\lambda_{g}, \lambda_{g}, \ldots, \lambda_{g-1}, \ldots, \lambda_{1} \}$ where $\lambda_1 \geq \ldots \geq \lambda_g$.
\par \medskip
Note that the exponents $\lambda_i$ are called $\lambda_i^{+}$ in numerous papers (e.g. in \cite{grandEKZ}, \cite{FMZ}). The exponents $\lambda_i^{-}$ will never be considered in the article.
\par \medskip
By \eqref{Forni}, all $\lambda_i$'s are at most one. If the component $\mathfrak{Q}$ is orientable, which means that every quadratic differential occuring in the stratum is the square of an abelian differential, then the top Lyapunov exponent $\lambda_1$ equals one. If not, the norm of Forni's B form is strictly smaller than one so that $\lambda_1 <1$.
\par \medskip
For any $(X, q)$ in a stratum $\mathrm{P} \mathcal{Q} (m_1, \ldots, m_r)$, the Poincar\'e metric on $\mathbb{H}$ induces a metric on the Teichm\"uller disc passing through $(X, q)$. The corresponding volume element defines a relative $(1,1)$ form $\mathrm{dV}_{\mathrm{Teich}}$, where by "relative" we mean relative with respect to the foliation by Teichm\"uller discs. If $\Theta$ is the curvature of the Hodge bundle on $\mathrm{P} \mathcal{Q} (m_1, \ldots, m_r)$, its trace is also a relative $(1, 1)$ form on the projective stratum. Let $\Lambda \colon \mathrm{P} \mathcal{Q} (m_1, \ldots, m_r) \rightarrow \mathbb{R}$ be defined by the formula 
\[
\Lambda=\frac{\mathrm{Tr}\,\Theta}{\mathrm{dV}_{\mathrm{Teich}}} \cdot
\]
Then Kontsevich-Forni's main formula for the Lyapunov exponents is
\[
\lambda_1 + \ldots + \lambda_g=\int_{\mathfrak{D}} \Lambda(X, q) \, \mathrm{dV}
\]
where $\mathfrak{D}$ is the projection of $\mathfrak{D}_1$ in the projective stratum and 
$\mathrm{dV}$ is the normalized volume element on $\mathfrak{D}$ of total mass one. For any $(X, q)$ in $\mathfrak{D}$, let $\theta_1, \ldots, \theta_g$ be the eigenvalues of Forni's B-form in the direction of the Teichm\"uller flow when diagonalized in an orthonormal basis for the intersection form. Using formul\ae\,  (\ref{courbure}), (\ref{comparaison}) and (\ref{clef}), we see that 
\begin{equation} \label{LF}
\lambda_1 + \ldots + \lambda_g=\int_{\mathfrak{D}} \,\left\{ \theta_1(X, q) + \ldots + \theta_g(X, q) \right\} \, \mathrm{dV}
\end{equation}
Forni's inequality implies that $\theta_i(X, q) \leq 1$ for all $i$ so that $\lambda_1 + \ldots + \lambda_g \leq g$.
\par \medskip
Thanks to the main result of \cite{EM}, any closed $\mathrm{SL}(2; \mathbb{R})$-invariant locus in the projective stratum $\mathrm{P}\mathcal{Q}(m_1, \ldots, m_r)$ is affine in period coordinates, hence carries a natural $\mathrm{SL}(2; \mathbb{R})$-invariant probability measure. It is also possible to define Lyapunov exponents for this measure, and formula \eqref{LF} holds.
\par \medskip
If $(X, q)$ is any half-translation surface, the closure of its $\mathrm{SL}(2; \mathbb{R})$-orbit in the normalized stratum is affine in period coordinates. It follows from \cite{CE} that almost every direction $\theta$, the real Teichm\"uller flow of $(X, e^{i \theta} q)$ is Osseledets-generic for the corresponding natural probability measure. Therefore it makes sense to consider Lyapunov exponents of $(X, q)$, and formula \eqref{LF} is still valid if we integrate on the closure of the $\mathrm{PGL}(2; \mathbb{R})$-orbit.

\section{The determinant locus} 
\subsection{General properties}

Let $\mathfrak{D}$ be a connected component of the projective stratum $\mathrm{P} \mathcal{Q}(m_1, \ldots, m_r)$. 
\begin{definition}
The determinant locus of $\mathfrak{D}$ is the set of elements $(X, q)$ in $\mathfrak{D}$ such that for all holomorphic $1$-forms $\alpha$ and $\beta$ on X$, \mathrm{B}_q(\alpha, \beta)=0$.
\end{definition}
Let us now recall Noether's theorem (see \cite[p. 104 \& 159]{FK}):
\begin{proposition} \label{5hdumat}
Let $X$ be a compact Riemann surface of genus $g$ and 
\[
\tau \colon \mathrm{Sym}^2 \, \Omega^1(X) \rightarrow \mathcal{Q}(X)
\] 
be the multiplication map.
\begin{enumerate}
\item[(i)] If $X$ is not hyperelliptic or if $g \leq 2$, $\tau$ is surjective.
\item[(ii)] If $X$ is hyperelliptic, $\mathrm{Im}\, (\tau)$ has codimension $g-2$ in $\mathcal{Q}(X)$ and consists of the quadratic differentials invariant by the hyperelliptic involution.
\end{enumerate}
\end{proposition}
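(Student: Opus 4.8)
The plan is to separate the hyperelliptic and non-hyperelliptic cases, disposing of small genus by hand. Write $K$ for the canonical bundle, so that $\Omega^1(X)=H^0(X,K)$, $\mathcal{Q}(X)=H^0(X,2K)$, and $\tau$ is the map induced by multiplication of sections; since multiplication is commutative, surjectivity of $\tau$ is equivalent to surjectivity of $\mu\colon H^0(K)\otimes H^0(K)\to H^0(2K)$. On dimensions, $\dim\mathrm{Sym}^2\Omega^1(X)=\binom{g+1}{2}$ and $\dim\mathcal{Q}(X)=3g-3$, and the inequality $\binom{g+1}{2}\ge 3g-3$ is equivalent to $(g-2)(g-3)\ge 0$, hence always holds; so surjectivity is never obstructed numerically. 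For $g=0$ both spaces vanish, for $g=1$ both are one-dimensional and $\tau$ is an isomorphism, and $g=2$ will be subsumed in case (ii), since there the asserted codimension $g-2$ is zero.

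For (ii) I would use an affine model $y^2=f(x)$ with $\deg f=2g+1$ and hyperelliptic involution $\iota\colon(x,y)\mapsto(x,-y)$. As $X/\iota\cong\mathbb{P}^1$ has no nonzero holomorphic one-form, $\iota$ acts by $-1$ on $\Omega^1(X)$, which has basis $\omega_k=x^k\,dx/y$ for $0\le k\le g-1$; hence $\iota$ acts by $+1$ on $\mathrm{Sym}^2\Omega^1(X)$ and $\tau$ automatically lands in the $\iota$-invariant part of $\mathcal{Q}(X)$. A local computation at the finite Weierstrass points and at the point over $x=\infty$ exhibits explicit bases: the invariant quadratic differentials are $x^k(dx/y)^2$ for $0\le k\le 2g-2$ (dimension $2g-1$) and the anti-invariant ones are $x^j(dx)^2/y$ for $0\le j\le g-3$ (dimension $g-2$), which together account for the whole dimension $3g-3$. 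Since $\tau(\omega_i\cdot\omega_j)=x^{i+j}(dx/y)^2$ and $i+j$ sweeps out all of $\{0,\dots,2g-2\}$, the image of $\tau$ is precisely the invariant part; this gives both assertions of (ii), with codimension $(3g-3)-(2g-1)=g-2$.

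For (i) with $X$ non-hyperelliptic and $g\ge 3$, I would establish surjectivity of $\mu$ through the base-point-free pencil trick. Pick a general effective divisor $D$ of degree $g-2$; then $h^0(D)=1$, $h^0(K-D)=2$, and $|K-D|$ is a base-point-free pencil; it is exactly here that non-hyperellipticity enters, via the general position theorem for the canonical curve. Applying the pencil trick with $M=K$ yields the exact sequence
\[
0\longrightarrow H^0(D)\longrightarrow H^0(K-D)\otimes H^0(K)\xrightarrow{\ \mu\ } H^0(2K-D),
\]
and since a Riemann-Roch computation gives $h^0(2K-D)=2g-1=2g-h^0(D)$, the restricted map onto $H^0(2K-D)$ is surjective. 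Thus $\mathrm{Im}(\mu)$ contains $H^0(2K-D)$, the space of quadratic differentials vanishing on $D$, for every general $D$. To finish, I would show these subspaces span $H^0(2K)$: for $g\ge 3$ the bundle $2K$ is very ample, so $D$ imposes independent conditions and the annihilator of $H^0(2K-D)$ is spanned by the evaluation functionals at the points of $D$; choosing two general such divisors with disjoint support, linear independence of the evaluations at the resulting $2g-4$ points forces the common annihilator to be zero, whence $\mu$ is surjective.

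The routine parts are the local holomorphy checks in (ii) and the Riemann-Roch bookkeeping in (i). The crux is the non-hyperelliptic case: concretely, proving that $|K-D|$ is a base-point-free pencil for general $D$ (the general position input), and then the spanning argument that upgrades ``$\mathrm{Im}(\mu)\supseteq H^0(2K-D)$ for each $D$'' to ``$\mathrm{Im}(\mu)=H^0(2K)$''. Once these two geometric facts are secured, the rest is formal.
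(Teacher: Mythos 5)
The paper offers no proof of this proposition: it is quoted as Noether's theorem with a pointer to Farkas--Kra, so there is no in-paper argument to compare against, and your task was effectively to supply a proof of a cited classical result. What you give is correct and is the standard modern treatment. In part (ii) the explicit model $y^2=f(x)$ works exactly as you say: the local analysis at the finite Weierstrass points and at infinity confirms that $x^k(dx/y)^2$ for $0\le k\le 2g-2$ and $x^j(dx)^2/y$ for $0\le j\le g-3$ are bases of the invariant and anti-invariant parts, the products $\omega_i\omega_j=x^{i+j}(dx/y)^2$ fill out the invariant part, and $(3g-3)-(2g-1)=g-2$ gives the codimension; this also settles $g=2$. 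In part (i) you reproduce the Arbarello--Cornalba--Griffiths--Harris proof of Max Noether's theorem, and you correctly isolate the two genuinely geometric inputs: that $|K-D|$ is a base-point-free pencil for a general effective $D$ of degree $g-2$ on a non-hyperelliptic curve (general position theorem, or Martens' bound $\dim W^1_{g-1}\le g-4$), and the spanning step, which indeed closes because $h^0(2K-D_1-D_2)=g+1$ equals the expected dimension of $H^0(2K-D_1)\cap H^0(2K-D_2)$, so $H^0(2K-D_1)+H^0(2K-D_2)=H^0(2K)$. For the record, the source the paper cites proves the theorem by a different (more classical, Weierstrass-point-based) method; your pencil-trick route is shorter once the general position theorem is granted, while the Farkas--Kra route is more elementary in its prerequisites. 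No gaps.
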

Since $\tau$ is the transpose of the derivative of the period map, Noether's result has the following geometric interpretation:
\begin{proposition}[Infinitesimal Torelli's theorem, {\cite[Cor. 10.25]{V}}] $ $ \par
Let $\xi \colon \mathcal{T}_g \rightarrow \mathbb{H}_g$ be the period map. Then $\xi$ is an immersion outside the hyperelliptic locus or everywhere if $g \leq 2$, and the restriction of $\xi$ to the hyperelliptic locus is also an immersion.
\end{proposition}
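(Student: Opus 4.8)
The plan is to read the statement as the geometric shadow of Noether's theorem (Proposition \ref{5hdumat}) via the duality between the derivative of the period map and the multiplication map $\tau$. First I would set up the linear algebra. At a point $X$ of $\mathcal{T}_g$ one has $\mathrm{T}_X\mathcal{T}_g \cong \mathrm{H}^1(X,\mathrm{T}_X)$, with cotangent space canonically $\mathcal{Q}(X)$, while the tangent space to $\mathbb{H}_g$ at $\xi(X)$ is the space of symmetric endomorphisms, identified via the polarization with $(\mathrm{Sym}^2\,\Omega^1(X))^*$. By Griffiths' formula for the derivative of the period map, $\mathrm{d}\xi_X \colon \mathrm{H}^1(X,\mathrm{T}_X)\to (\mathrm{Sym}^2\,\Omega^1(X))^*$ is, under the Serre/Kodaira--Spencer pairing, exactly the transpose of the multiplication map $\tau \colon \mathrm{Sym}^2\,\Omega^1(X)\to \mathcal{Q}(X)$ of Proposition \ref{5hdumat} (this is the sense in which, as noted above, $\tau$ is the transpose of $\mathrm{d}\xi$). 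Consequently $\xi$ is an immersion at $X$ if and only if $\mathrm{d}\xi_X$ is injective, which holds if and only if $\tau$ is surjective.

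With this reformulation the first assertion is immediate: part (i) of Proposition \ref{5hdumat} gives surjectivity of $\tau$ whenever $X$ is non-hyperelliptic, or whenever $g\leq 2$, so $\xi$ is an immersion at all such points. (For $g\leq 2$ every curve is hyperelliptic but $\tau$ is still surjective, which is why the statement reads ``everywhere'' in that range.)

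For the hyperelliptic locus I would bring in the hyperelliptic involution $\iota$. Since $\iota$ acts by $-1$ on every holomorphic one-form, it acts trivially on $\mathrm{Sym}^2\,\Omega^1(X)$ and hence on its dual, so $\mathrm{d}\xi_X$ kills the $\iota$-anti-invariant directions. Writing $\mathrm{H}^1(X,\mathrm{T}_X)=\mathrm{H}^1(X,\mathrm{T}_X)^+\oplus \mathrm{H}^1(X,\mathrm{T}_X)^-$ for the $\iota$-eigenspace decomposition, the invariant summand $\mathrm{H}^1(X,\mathrm{T}_X)^+$ is precisely the tangent space to the hyperelliptic locus at $X$. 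Because the pairing between $\mathrm{H}^1(X,\mathrm{T}_X)$ and $\mathcal{Q}(X)$ is $\iota$-equivariant, $\mathrm{H}^1(X,\mathrm{T}_X)^+$ is dual to the invariant subspace $\mathcal{Q}(X)^+$. Now part (ii) of Proposition \ref{5hdumat} asserts that $\mathrm{Im}(\tau)$ is exactly $\mathcal{Q}(X)^+$; transposing, the restriction $\mathrm{d}\xi_X|_{\mathrm{H}^1(X,\mathrm{T}_X)^+}$ is the map whose transpose is $\tau\colon \mathrm{Sym}^2\,\Omega^1(X)\to \mathcal{Q}(X)^+$, which is onto, so this restriction is injective. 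That is exactly the claim that $\xi$ restricted to the hyperelliptic locus is an immersion.

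The substantive input is Noether's theorem itself, which is quoted; granting it, the genuine work is the identification of $\tau$ as the transpose of $\mathrm{d}\xi_X$, the classical content of infinitesimal Torelli resting on Griffiths' description of the period derivative. I expect the most delicate point to be the hyperelliptic case, where one must track the $\iota$-action through all three spaces at once and check that the equivariance of the duality pairing really matches $\mathrm{H}^1(X,\mathrm{T}_X)^+$ with $\mathcal{Q}(X)^+$; here the dimension count $\dim\mathcal{Q}(X)^+=(3g-3)-(g-2)=2g-1$, equal to the dimension of the hyperelliptic locus, is the bookkeeping that confirms no invariant tangent direction is lost.
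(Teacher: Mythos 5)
Your argument is correct and is exactly the route the paper takes: the paper gives no independent proof, only the remark that $\tau$ is the transpose of the derivative of the period map (so Noether's theorem, Proposition \ref{5hdumat}, dualizes to the infinitesimal Torelli statement) together with the citation to Voisin. Your write-up simply fleshes out that remark, including the standard identification of the tangent space of the hyperelliptic locus with the $\iota$-invariant part of $\mathrm{H}^1(X,\mathrm{T}_X)$ and the matching dimension count.
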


Remark that Forni's $B$-form factors through $\mathrm{Im}\, \tau$, and can be extended naturally to $\mathcal{Q}(X)$ by the formula $\mathrm{B}_q(\tilde{q})=\int_X \tilde{q} \,\dfrac{|q|}{q} \cdot$
\par \medskip
The key proposition of this section is:
\begin{proposition} \label{prop:forget} Let $(X,q)$ be a half-translation surface, $n$ the number of poles of $q$, and $\mathbb{D}$ be its Teich\-m\"uller disc. Then the following are equivalent:
\begin{enumerate} 
\item[(i)] $\mathbb{D}$ lies in the determinant locus.
\item[(ii)] The forgetful map $\mathcal{T}_{g, n} \rightarrow \mathcal{T}_g$ maps $\mathbb{D}$ to a point.
\item[(iii)] For any $(X_t, q_t)$ in $\mathbb{D}$, the extension of $\mathrm{B}_{q_t}$ to $\mathcal{Q}(X_t)$ vani\-shes.
\item[(iv)] All Lyapunov exponents of $(X,q)$ are zero.
\end{enumerate}
\end{proposition}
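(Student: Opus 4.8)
The plan is to prove Proposition~\ref{prop:forget} by establishing a cycle of implications among the four conditions, exploiting the interpretations of Forni's $\mathrm{B}$-form developed in the background section. The central observation is that all four statements are really expressing the vanishing of the derivative of the period map along the Teichm\"uller disc, viewed from four different angles: as the bilinear form $\mathrm{B}_q$ on $\Omega(X)$, as the full linear form on $\mathcal{Q}(X)$, as the second fundamental form $\sigma = \xi'$ of the Hodge bundle, and as the Lyapunov exponents computed by the Kontsevich--Forni formula.

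First I would prove the equivalence of (i) and (iii). The form $\mathrm{B}_q$ on $\Omega(X)$ and its extension to $\mathcal{Q}(X)$ are linked by the multiplication map $\tau$ of Noether's theorem (Proposition~\ref{5hdumat}): since $\mathrm{B}_q$ factors through $\mathrm{Im}\,\tau$, vanishing on $\Omega(X)\otimes\Omega(X)$ is equivalent to vanishing on $\mathrm{Im}\,\tau$. The subtlety is exactly the hyperelliptic case, where $\mathrm{Im}\,\tau$ is a proper subspace. Here I would invoke part (ii) of Noether: on a hyperelliptic surface, the extension of $\mathrm{B}_{q_t}$ is computed by pairing $q_t$ against quadratic differentials $\tilde q$, but the relevant $\tilde q$ can be taken hyperelliptic-invariant (the anti-invariant part integrates to zero against $|q|/q$), so vanishing on $\mathrm{Im}\,\tau$ forces vanishing on all of $\mathcal{Q}(X_t)$. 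Thus (i) $\Leftrightarrow$ (iii) holds uniformly, and I must phrase this argument carefully so that it does not secretly assume non-hyperellipticity.

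Next I would connect (ii) to (iii) via the Ahlfors formula and the identification $\sigma = \xi'$ from \eqref{comparaison}. The forgetful map $\mathcal{T}_{g,n}\to\mathcal{T}_g$ collapses $\mathbb{D}$ to a point precisely when the classifying map $\xi_{\mathrm{Teich}}$ is constant along the disc, i.e.\ when the Teichm\"uller tangent direction $\xi'_{\mathrm{Teich}}(\partial_t)$ annihilates every quadratic differential $\alpha\otimes\beta$ — but by the extended Ahlfors formula this is exactly $\mathrm{B}_{q_t}(\alpha,\beta)=0$, and in its full form (pairing against arbitrary $\tilde q \in \mathcal{Q}(X_t)$) it is the vanishing of the extension. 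One must be careful that $\xi_{\mathrm{Teich}}$ constant on the disc genuinely means the underlying complex structure is fixed; this uses the infinitesimal Torelli statement so that the period map detects the moduli point faithfully, again with the hyperelliptic locus handled separately. This gives (ii) $\Leftrightarrow$ (iii).

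Finally I would derive the equivalence with (iv). Using formula \eqref{clef}, the eigenvalues $\theta_i(X_t,q_t)$ of $\mathrm{B}_{q_t}$ in the Teichm\"uller direction are all zero exactly when $\mathrm{B}_{q_t}$ vanishes on $\Omega(X_t)$, i.e.\ along the whole disc when (i) holds. By the Kontsevich--Forni formula \eqref{LF}, $\lambda_1+\dots+\lambda_g = \int_{\mathfrak D}(\theta_1+\dots+\theta_g)\,\mathrm{dV}$, and since each $\theta_i \le 1$ by Forni's inequality \eqref{Forni} and the $\theta_i$ are nonnegative eigenvalues of a positive semidefinite pairing (as $\mathrm{B}_{q_t}(\chi,\chi)$ governs the Hodge-norm derivative), the integral vanishes if and only if the integrand vanishes almost everywhere on the invariant locus. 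Because the closure of the orbit carries an ergodic invariant measure and $\theta_i$ are continuous, this forces $\mathrm{B}_{q_t}$ to vanish identically on $\mathbb{D}$. I expect the main obstacle to lie in this last step: passing from the \emph{integral} vanishing of Lyapunov exponents to the \emph{pointwise} vanishing of $\mathrm{B}_{q_t}$ everywhere on the disc requires that the generic point of the disc be Osseledets-generic for the invariant measure, which is supplied by the results of \cite{CE} and \cite{EM} quoted above, together with the sign-definiteness of the $\theta_i$ ensuring no cancellation in the integral.
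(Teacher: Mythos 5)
Your reduction of the theorem to (i) $\Leftrightarrow$ (iii), (ii) $\Leftrightarrow$ (iii) and (i) $\Leftrightarrow$ (iv) is reasonable, and your treatment of (i) $\Leftrightarrow$ (iv) (nonnegativity and continuity of the $\theta_i$ together with the Kontsevich--Forni formula over the orbit closure) is essentially the paper's argument. The problem is the step (i) $\Rightarrow$ (iii). Off the hyperelliptic locus it does follow pointwise from Noether's theorem, since $\tau$ is then surjective. But your patch for the hyperelliptic case is false: the claim that ``the anti-invariant part integrates to zero against $|q|/q$'' requires the Beltrami differential $|q|/q$ to be invariant under the hyperelliptic involution $j$, i.e.\ $j^*q=q$. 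If instead $j^*q=-q$, then for an anti-invariant $\tilde q$ the integrand $\tilde q\,|q|/q$ is $j$-\emph{invariant} and has no reason to vanish; taking $\tilde q=q$ gives $\mathrm{B}_q(q)=\int_X|q|>0$. This is exactly the configuration of the Remark following Corollary \ref{cor:attention}: there $\mathrm{B}_q$ vanishes on all of $\Omega(X)\otimes\Omega(X)$ yet its extension to $\mathcal{Q}(X)$ is nonzero. So the pointwise implication you are trying to establish is genuinely false, not merely harder in the hyperelliptic case.

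The fix is that (i) $\Rightarrow$ (iii) must use the hypothesis on the \emph{whole disc}, and the paper routes it through (ii). Vanishing of $\mathrm{B}_{q_t}$ on $\Omega(X_t)\otimes\Omega(X_t)$ for every $t$ says, via the Ahlfors formula, that the composite $\mathbb{D}\hookrightarrow\mathcal{T}_{g,n}\rightarrow\mathcal{T}_g\rightarrow\mathbb{H}_g$ has identically zero derivative; the infinitesimal Torelli theorem (an immersion statement both outside the hyperelliptic locus and for the restriction of the period map to it) then forces the image of $\mathbb{D}$ in $\mathcal{T}_g$ to be a single point. This is where Torelli is actually needed --- not in (ii) $\Leftrightarrow$ (iii), where you invoke it. Once (ii) is known, the derivative of the projection of the flow to $\mathcal{T}_g$ is precisely the linear form $\tilde q\mapsto\mathrm{B}_{q_t}(\tilde q)$ on $\mathcal{Q}(X_t)$, so constancy gives the vanishing of the full extension, which is (iii); and (iii) $\Rightarrow$ (i) is immediate by restricting to $\mathrm{Im}\,\tau$.
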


\begin{proof} $ $
\par
\noindent (i) $\Rightarrow$ (ii) Using \eqref{clef}, the composite map $\mathbb{D} \hookrightarrow \mathcal{T}_{g, n} \rightarrow \mathcal{T}_g \xrightarrow{\tau} \mathbb{H}_g$ has zero derivative. Assume that $\mathbb{D}$ is not contained in the hyperelliptic locus. Thanks to the infinitesimal Torelli theorem, $\mathbb{D}$ is mapped to a point via the forgetful map $\mathcal{T}_{g, n} \rightarrow \mathcal{T}_g$. Assume now that $\mathbb{D}$ is contained in the hyperelliptic locus. Then the restriction of $\tau$ to this locus is again an immersion, and we can apply the same argument.
\par \medskip
\noindent (ii) $\Rightarrow$ (iii) If $(X_t, q_t)$ is a point in $\mathbb{D}$, the derivative of projection of the Teichm\"uller flow of $(X_t, q_t)$ on $\mathcal{T}_{g}$ is the linear form $\tilde{q} \rightarrow \mathrm{B}_{q_t} (\tilde{q})$ on $\mathcal{Q}(X_t)$.
\par \medskip
\noindent (iii) $\Rightarrow$ (i)
Obvious.
\par \medskip
\noindent (i) $\Leftrightarrow$ (iv) Let $\mathcal{V}$ be the closure of the $\mathrm{PSL}(2; \mathbb{R})$-orbit of $X$ and $\nu$ the corresponding $\mathrm{PSL}(2; \mathbb{R})$-invariant probability measure. If $\lambda_1, \ldots, \lambda_g$ are the Lyapunov exponents of $(X, q)$, then 
\[
\lambda_1 + \ldots + \lambda_g=\int_{\mathcal{V}} \left\{ \theta_1(X, q) + \ldots +\theta_g(X, q) \right\} d \nu.
\]
Since all $\theta_i$'s are nonnegative and continuous functions, $\lambda_1= \ldots = \lambda_g=0$ if and only if all $\theta_i$'s vanish on $\mathbb{D}$.

\end{proof}

\begin{corollary} \label{cor:attention}
If $q$ is a holomorphic quadratic differential on $X$, the Teichm\"uller disc of $(X, q)$ is not included in the determinant locus. 
\end{corollary}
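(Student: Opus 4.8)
The plan is to deduce this directly from Proposition~\ref{prop:forget}, exploiting the single fact that distinguishes a holomorphic quadratic differential from a genuinely meromorphic one: it has \emph{no} poles. Indeed, if $q$ is holomorphic then its set of poles $S$ is empty, so $n=0$ and the marked Teichm\"uller space $\mathcal{T}_{g,n}$ is just $\mathcal{T}_g$; in particular the forgetful map $\mathcal{T}_{g,n}\to\mathcal{T}_g$ appearing in condition (ii) of Proposition~\ref{prop:forget} is the identity. I would argue by contradiction: if the Teichm\"uller disc $\mathbb{D}$ of $(X,q)$ were contained in the determinant locus, the implication (i) $\Rightarrow$ (ii) of Proposition~\ref{prop:forget} would force this identity map to send $\mathbb{D}$ to a single point, i.e. the underlying compact Riemann surface $X_t$ would be constant along $\mathbb{D}$.

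The second step is to rule this out. Since $n=0$, the Teichm\"uller flow of $(X,q)$ is, by the very definition recalled in the background section, the geodesic flow for the Teichm\"uller metric on $\mathcal{T}_{g,n}=\mathcal{T}_g$ itself, starting from $X$ in the direction $\mu_q$. Concretely, flowing by $M_t=\begin{pmatrix}e^t&0\\0&e^{-t}\end{pmatrix}$ inside $\mathbb{D}$ traces an honest unit-speed Teichm\"uller geodesic of $\mathcal{T}_g$. Such a geodesic is non-constant, so the family $X_t$ is non-constant, contradicting the conclusion of the first step. Hence $\mathbb{D}$ is not contained in the determinant locus.

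The only point requiring genuine care — and what I expect to be the (small) main obstacle — is precisely this non-constancy claim, namely that the image of $\mathbb{D}$ in $\mathcal{T}_g$ is positive-dimensional. One cannot simply invoke that $\mathbb{H}\to\mathrm{P}\mathcal{Q}(m_1,\dots,m_r)$ is an immersion, since the entire raison d'\^etre of the determinant locus is that the \emph{projection} to $\mathcal{T}_g$ may have vanishing derivative. What saves the argument is that here we project to $\mathcal{T}_g$ with nothing being forgotten (as $n=0$), so the move of the complex structure is detected by the geodesic itself rather than by the immersion transverse to it; I would make this precise by pointing out that the Teichm\"uller-flow line in $\mathbb{D}$ maps to a nontrivial geodesic arc of $\mathcal{T}_g$.

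As an alternative, purely computational route one could instead exhibit holomorphic $1$-forms $\alpha,\beta$ with $\mathrm{B}_q(\alpha,\beta)\neq 0$ directly. When $q=\omega^2$ is a square this is immediate, since $\mathrm{B}_q(\omega,\omega)=\int_X|q|>0$; equivalently the component is orientable and $\lambda_1=1\neq 0$, so one concludes at once via the equivalence (i) $\Leftrightarrow$ (iv). For a non-square holomorphic $q$, however, $\omega$ lives only on the holonomy double cover, and producing a suitable pair on $X$ itself out of the symmetric form $\alpha\mapsto\mathrm{B}_q(\alpha,\alpha)$ is more delicate. This is why I would favour the forgetful-map argument above, which disposes of all holomorphic $q$ uniformly and in one line.
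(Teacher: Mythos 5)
Your argument is correct, but it is not the one the paper uses, and the comparison is instructive. The paper's proof is a one-liner via condition (iii) of Proposition~\ref{prop:forget}: since $q$ is holomorphic it is itself an element of $\mathcal{Q}(X)$, so one can test the \emph{extended} Forni form (the linear form $\tilde q \mapsto \mathrm{B}_q(\tilde q)=\int_X \tilde q\, |q|/q$ introduced just before the proposition) on $\tilde q = q$ and get $\mathrm{B}_q(q)=\int_X|q|>0$; hence (iii) fails and so does (i). This is precisely the computation you flirt with in your last paragraph and then set aside for non-square $q$ -- but the difficulty you raise there (producing holomorphic $1$-forms $\alpha,\beta$ with $\mathrm{B}_q(\alpha,\beta)\neq 0$) is exactly what the extension to $\mathcal{Q}(X)$ is designed to circumvent: no $1$-forms are needed, and the argument is uniform in $q$. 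Your actual route -- $n=0$, so the forgetful map in (ii) is the identity, so containment in the determinant locus would force the Teichm\"uller disc to be a single point of $\mathcal{T}_g$, contradicting the fact that the Teichm\"uller geodesic in direction $\mu_q$ is a non-constant (indeed unit-speed, hence isometrically embedded) geodesic of $\mathcal{T}_g$ -- is valid, and you correctly identify the one point needing care. Note, though, that the two proofs are infinitesimally the same fact in different clothing: the derivative of the projected flow is the linear form $\tilde q\mapsto \mathrm{B}_q(\tilde q)$, and its nonvanishing (the paper's $\mathrm{B}_q(q)>0$) is what makes your geodesic move. The paper's version is preferable because it is local, avoids any appeal to Teichm\"uller's theorem on geodesics, and makes transparent where the hypothesis ``holomorphic'' enters; your version has the merit of explaining geometrically why a pole-free $q$ cannot sit in the determinant locus, namely that there is nothing to forget.
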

\begin{proof}
If $q$ is holomorphic, $\mathrm{B}_q(q)>0$ and we apply Proposition \ref{prop:forget}.
\end{proof}

\begin{remark}
In the hyperelliptic case, it can happen that $q$ is holomorphic but that $(X, q)$ lies in the determinant locus. Let $X$ be an hyperelliptic surface of genus at least $3$, let $j$ be the hyperelliptic involution, and let $q$ be an anti-invariant holomorphic quadratic differential \emph{(}if $X$ is the Riemann surface of a polynomial $w^2-P(z)$, we can take $q=w^{-1}{dz^{\otimes 2}}$\emph{)}. Since any holomorphic $1$-form on $X$ is anti-invariant under $j^*$, $\mathrm{B}_q=0$. Hence $(X, q)$ lies in the determinant locus, but the Teichm\"uller disc of $(X, q)$ goes outside of the hyperelliptic locus. 
\end{remark}
We can give an explicit lower bound on the number $n$.
\begin{proposition} \label{prop:ekzz}
Let $(X, q)$ be a half-translation surface of genus at least $1$ satisfying the equivalent conditions of Proposition \ref{prop:forget}. Then $q$ has at least $\mathrm{max}\,(2g-2, 2)$ poles.
\end{proposition}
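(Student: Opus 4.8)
The plan is to read the desired inequality off the Eskin--Kontsevich--Zorich formula; the label of the proposition already announces this. By the equivalent conditions of Proposition \ref{prop:forget}, all the Lyapunov exponents $\lambda_1, \ldots, \lambda_g$ of $(X,q)$ vanish. For the closure $\mathcal{V}$ of the $\mathrm{PSL}(2;\mathbb{R})$-orbit, equipped with its natural invariant probability measure (exactly the setting of \eqref{LF}), the Eskin--Kontsevich--Zorich formula \cite{grandEKZ} refines \eqref{LF} into
\[
\lambda_1 + \ldots + \lambda_g = \frac{1}{24}\sum_{i=1}^r \frac{m_i(m_i+4)}{m_i+2} + \frac{\pi^2}{3}\, c_{\mathrm{area}},
\]
where $c_{\mathrm{area}} \geq 0$ is the Siegel--Veech area constant. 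Since the left-hand side vanishes and $c_{\mathrm{area}} \geq 0$, I would extract the purely combinatorial inequality
\[
\sum_{i=1}^r \frac{m_i(m_i+4)}{m_i+2} \leq 0.
\]

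The second step is elementary bookkeeping on this inequality. Writing $t(m) = \frac{m(m+4)}{m+2} = (m+2) - \frac{4}{m+2}$, one observes that $t(-1) = -3$, while $t(m) > 0$ for every zero order $m \geq 1$; thus the $n$ simple poles are the only singularities carrying negative weight. Let $r'$ be the number of zeros, so that $\sum_{\mathrm{zeros}} m_i = 4g - 4 + n$ and $\sum_{\mathrm{zeros}}(m_i+2) = 4g - 4 + n + 2r'$. Substituting and using $m_i + 2 \geq 3$, i.e. $\sum_{\mathrm{zeros}}\frac{1}{m_i+2} \leq \frac{r'}{3}$, the inequality rearranges to
\[
2n \geq 4g - 4 + 2r' - 4\sum_{\mathrm{zeros}}\frac{1}{m_i+2} \geq 4g - 4 + \frac{2r'}{3},
\]
hence $n \geq 2g - 2 + \frac{r'}{3} \geq 2g - 2$. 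For $g \geq 2$ this already gives $n \geq 2g - 2 = \max(2g-2,2)$.

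It remains to secure the bound $n \geq 2$ when $g = 1$, where the combinatorial inequality only yields the vacuous $n \geq 0$. Corollary \ref{cor:attention} excludes $n = 0$, since a holomorphic $q$ never lies in the determinant locus. To exclude $n = 1$, note that on a genus-one surface the relation $\sum m_i = 0$ forces $q$ to lie in $\mathcal{Q}(1,-1)$, so $q = f\,dz^2$ for an elliptic function $f$ whose divisor is $[z_0] - [p]$ with $z_0 \neq p$; such a degree-zero divisor is not principal (equivalently, an elliptic function cannot have a single simple pole), so this stratum is empty. Hence $n \geq 2$ when $g = 1$.

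The only genuinely external input — and the main point of the argument — is the explicit shape of the Eskin--Kontsevich--Zorich formula together with the sign $c_{\mathrm{area}} \geq 0$; once $\sum_i t(m_i) \leq 0$ is in hand, everything reduces to the remark that simple poles are the unique singularities of negative weight. The step I would check most carefully is the normalisation of the constant $\frac{1}{24}\frac{m(m+4)}{m+2}$ for \emph{meromorphic} quadratic differentials (so that indeed $t(-1) = -3$), and the fact that the formula, established for invariant suborbifolds, transfers to $\mathcal{V}$ with its invariant measure precisely as \eqref{LF} does.
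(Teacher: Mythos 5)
Your proposal is correct and follows essentially the same route as the paper: apply the Eskin--Kontsevich--Zorich formula to the orbit closure, use $c_{\mathrm{area}}\geq 0$ to isolate the combinatorial inequality, and observe that only the simple poles contribute negatively; your bookkeeping via $t(m)=(m+2)-\frac{4}{m+2}$ reproduces the paper's bound $n\geq 2g-2+\sum_j\frac{m_j}{m_j+2}$ in the mildly weaker form $n\geq 2g-2+\frac{r'}{3}$. The one point of divergence is the low-genus endpoint: the paper only invokes Kra's theorem \cite[Thm 4']{Kra} to get $n\geq 1$ and does not explicitly push to $n\geq 2$ when $g=1$, whereas you exclude $n=0$ via Corollary \ref{cor:attention} and $n=1$ via the emptiness of $\mathcal{Q}(1,-1)$ on the torus --- a small but genuine completion of the argument for that case.
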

\begin{proof}
The fact that the number $n$ of poles of $q$ must be at least one follows  from \cite[Thm 4']{Kra}. To get the lower bound $2g-2$ in the proposition, we use \cite[Thm 2]{grandEKZ} for the closure of the $\mathrm{SL}(2; \mathbb{R})$-orbit $\mathcal{O}$ of $(X, q)$, which is contained in a stratum $\mathrm{P} \mathcal{Q} \bigl( (-1)^n, m_1, \ldots, m_r \bigr) $: we get
\[
\lambda_1 + \ldots + \lambda_g=\dfrac{1}{24} \sum_{j=1}^r \dfrac{m_j \,(m_j+4)}{m_j+2} - \frac{n}{8} + \frac{\pi^2}{3}C_{\mathrm{area}} \Bigl(\overline{O} \Bigr)
\]
where $C_{\mathrm{area}}  \Bigl(\overline{O} \Bigr)$ is a Siegel-Veech constant of the locus $\overline{\mathcal{O}}$ which is nonnegative. Thus, if $\lambda_1 + \ldots + \lambda_g=0$, 
\[
 \sum_{j=1}^r \dfrac{m_j \, (m_j+4)}{m_j+2} \leq 3{n}.
\]
Since $(\sum m_j)-n=4g-4$,
\[
2g-2 \leq \sum_{j=1}^r \dfrac{m_j}{m_j+2} + 2g-2 \leq n
\]
and we get the required estimate.
\begin{remark}
We will see that this bound is asymptotically sharp in \S \ref{winner}.
\end{remark}
\end{proof} 
\subsection{Pillow-tiled surfaces} 
In this section, we give constraints on pillow-tiled surfaces whose Teichm\"uller disc lies in the determinant locus. Let us start with a technical result:
\begin{proposition}
Let $X$ be a Riemann surface of genus $g$, $\mathrm{B}(t_0, \varepsilon)$ a small ball in $\mathbb{C} \setminus \{0, 1, \infty \}$,  and $\varphi \colon X \times \mathrm{B}(t_0, \varepsilon) \rightarrow \mathbb{P}_1$ be a holomorphic map satisfying the following conditions:
\begin{enumerate}
\item For any $t$ in $\mathrm{B}(t_0, \varepsilon)$, $\varphi_t$ is non-constant and $\mathrm{B}(\varphi_t)=\{0, 1, \infty, t\}$. 
\item The configuration of the ramification points of $\varphi_t$ remains constant with $t$.
\end{enumerate}
If $d$ is the degree of the branched coverings $\varphi_t$, then $3 (g-1 )\leq d$.
\end{proposition}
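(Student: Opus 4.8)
The plan is to exploit that the source curve $X$ is the same for every $t$, so that the family can be differentiated on a \emph{fixed} Riemann surface. Write $\eta_t:=\partial_t\varphi_t$; since $\varphi\colon X\times \mathrm{B}(t_0,\varepsilon)\to\mathbb{P}_1$ is holomorphic and $\mathbb{P}_1$ is compact, $\eta_t$ is a global holomorphic section of the line bundle $\varphi_t^*T\mathbb{P}_1$, which has degree $2d$. The whole argument then consists in locating the zeros of $\eta_t$ from the two hypotheses and feeding the result into Riemann--Hurwitz.

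First I would run the local analysis at the ramification points. Over each of the three \emph{stationary} branch points $0,1,\infty$ the ramification points are fixed on $X$ by hypothesis (2), and the value of $\varphi_t$ there is a constant independent of $t$; in a local coordinate $z$ centred at such a point of index $e$ one has $\varphi_t(z)=c_e(t)z^{e}+\cdots$ with no lower-order term, so $\eta_t(z)=c_e'(t)z^{e}+\cdots$ vanishes to order at least $e$. By contrast, at a ramification point $q$ lying over the moving value $t$ one has $\varphi_t(q)=t$, hence $\eta_t(q)=\partial_t t=1\neq 0$: the section $\eta_t$ carries \emph{no} forced zero over the moving point. The preimages of $0,1,\infty$ that are not ramification points must travel with $t$, so no vanishing may be asserted at them; keeping track of this distinction is what makes the count honest.

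Next I would globalise. Summing the forced orders $\ge e$ over all ramification points above $0,1,\infty$ and comparing with the total zero count $2d$ of $\eta_t$ produces an inequality controlling $R_0+R_1+R_\infty$, equivalently a lower bound on the number of unramified preimages over the three fixed points. Inserting this into the Riemann--Hurwitz identity $\sum_{b\in\{0,1,\infty,t\}}(d-k_b)=2g-2+2d$, and using the elementary bound $R_t\le d-1$ on the ramification over the moving point, is what should yield the estimate $3(g-1)\le d$, the factor $3$ reflecting the three stationary branch points. A conceptually cleaner packaging is to pass to the meromorphic vector field $v_t:=\eta_t/d\varphi_t$, the canonical lift of $\partial_t$ to the fixed curve $X$: its very existence as a global object is the statement that $X$ does not move, its only poles sit over $t$ with orders $e-1$, and its zeros sit over $0,1,\infty$, so that the identity $\deg\operatorname{div}v_t=2-2g$ repackages the same inequality.

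The hard part is purely quantitative: extracting the exact constant. The vanishing orders must be tracked with care, separating the fixed ramification points (where $\eta_t$ vanishes to order $\ge e$, and where in an affine chart the $t$-derivative of a \emph{moving} pole of $\varphi_t$ has its order inflated) from the moving unramified preimages. Balancing the contribution of the three stationary branch points against the single moving one, so that the combination with Riemann--Hurwitz lands precisely on $3(g-1)\le d$ rather than on a weaker or differently-shaped bound, is where I expect the main difficulty; a careful divisor computation for $v_t$ — or, equivalently, a Kodaira--Spencer/second-fundamental-form reading in the spirit of the earlier sections — seems indispensable here.
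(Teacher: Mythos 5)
Your setup is exactly the paper's: the section $s=\partial_t\varphi_t$ of $\varphi_{t_0}^*T\mathbb{P}^1$ (degree $2d$), its forced vanishing at the ramification points lying over the three stationary branch values, and its non-vanishing (value $1$ in the affine chart) at the ramification points over the moving value $t$, so that $s$ is a nonzero section killed by the divisor $\mathcal{R}_0+\mathcal{R}_1+\mathcal{R}_\infty$. Up to that point the proposal is sound. But the closing step you propose does not reach the stated constant, and you say so yourself. Writing $R_b=\deg\mathcal{R}_b$, your count gives $R_0+R_1+R_\infty\leq 2d$ (or $2d-3$ with the refinement $e_x\geq e_x-1+1$), and Riemann--Hurwitz $R_0+R_1+R_\infty+R_t=2(g+d-1)$ then yields $R_t\geq 2g+1$; combined with the ``elementary bound'' $R_t\leq d-1$ this gives only $d\geq 2g+2$, which is strictly weaker than $3(g-1)$ as soon as $g\geq 6$. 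So there is a genuine gap: the quantitative part you flag as ``the main difficulty'' is precisely what is missing.

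The idea you are missing is a symmetrization, not a finer divisor computation. A Möbius transformation permuting the four branch values $\{0,1,\infty,t\}$ (and replacing $t$ by the corresponding cross-ratio, which still varies holomorphically and non-constantly) preserves all the hypotheses, so one may assume that the moving branch value is the one whose ramification divisor has \emph{smallest} degree; hence $\deg\mathcal{R}_t\leq\frac14\deg\mathcal{R}=\frac12(g+d-1)$ by Riemann--Hurwitz. Plugging this into
\[
0\leq\deg\bigl(\varphi_{t_0}^*T\mathbb{P}^1(-\mathcal{R}_0-\mathcal{R}_1-\mathcal{R}_\infty)\bigr)=2d-\deg\mathcal{R}+\deg\mathcal{R}_t\leq 2d-\tfrac34\deg\mathcal{R}=\tfrac{d}{2}-\tfrac{3g}{2}+\tfrac32
\]
gives exactly $3(g-1)\leq d$. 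In other words, the factor $3$ does not come from ``three stationary points versus one moving point'' in your fixed labelling, but from the freedom to choose which of the four branch points plays the role of the moving one.
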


\begin{proof}
For any $x$ in $X$, let $s(x)={\frac{\partial}{\partial t }}_{|t=t_0} \varphi_t(x) \in \mathrm{T}_{\varphi_{t_0}(x)} \mathbb{P}^1 $. Then $s$ is a holomorphic section of the holomorphic line bundle $\varphi_{t_0}^* \mathrm{T} \mathbb{P}^1$. Let $x_0$ be a ramification point of $\varphi_{t_0}$ such that $\varphi_{t_0}(x_0)=0$. Let us assume that $s(x_0) \neq 0$. By the implicit function theorem, the equation $\varphi_t (x)=0$ has a unique solution $(x,t(x))$ depending holomorphically on $x$ for $(x,t)$ near $(x_0, t_0)$. Since $\varphi_{t(x)} (x)=0$, we get 
\[
\partial_t  \varphi (x, t(x)) \,t'(x)+(\varphi_{t(x)})'(x)=0.
\] 
By hypothesis, $x$ is a ramification point of $\varphi_{t(x)}$, i.e. $(\varphi_{t(x)})'(x)=0$. Besides, since $\partial_t  \varphi (t(x), x) \rightarrow s(x_0)$ as $x \rightarrow x_0$, $t'$ vanishes. Hence $\varphi_{t_0}(x)$ vanishes for $x$ near $x_0$, so that $\varphi_{t_0}$ is constant and we get a contradiction. It follows that $s$ vanishes at $x_0$. The same result also holds over any ramification point of $\varphi_{t_0}$ lying over $1$ and $\infty$. Lastly, if $\psi_t(x)=\varphi_t(x)-t$, the argument we used proves that for any ramification point $x$ of $\psi_t$ lying over $0$, ${\frac{\partial}{\partial t }}_{|t=t_0} \psi_t(x)=0$, which means that $s(x)=1$. In particular $s$ in nonzero.
\par \medskip
We can now decompose the ramification divisor $\mathcal{R}$ of the branched covering $\varphi_{t_0}$ as the sum $\mathcal{R}_0+ \mathcal{R}_1+\mathcal{R}_{\infty}+ \mathcal{R}_t$. Besides, we can assume that $\mathrm{deg} \, \mathcal{R}_t$ is smaller than $\mathrm{deg} \, \mathcal{R}_0$, $\mathrm{deg} \, \mathcal{R}_1$ and $\mathrm{deg} \, \mathcal{R}_{\infty}$, otherwise we move the points $0, 1, \infty$ and $t$ by a suitable homographic transformation. Besides, thanks to the Riemann-Hurwitz formula, we have
\[
\mathrm{deg}\,\mathcal{R}=2(g+d-1)
\]
\par \smallskip
\noindent Now $s$ is a nonzero section of the line bundle $\mathcal{L}=\varphi_{t_0}^* \mathrm{T} \mathbb{P}^1 (-\mathcal{R}_{0}-\mathcal{R}_1-\mathcal{R}_{\infty})$, and
\[
0 \leq \mathrm{deg}\, \mathcal{L}=2d- \mathrm{deg}\, \mathcal{R}+ \mathrm{deg}\, \mathcal{R}_t \leq 2d- \frac{3}{4}\,\mathrm{deg}\, \mathcal{R}=\frac{d}{2}-\frac{3g}{2}+\frac{3}{2} \cdot
\]
The result follows.
\end{proof}

\begin{corollary}
Let $(X, q, \pi)$ be a pillow-tiled surface of genus $g$, and let $d$ be the degree of $\pi$. If the Teichm\"uller disc of $(X, q)$ lies in the determinant locus, then $d \geq 3(g-1)$.
\end{corollary}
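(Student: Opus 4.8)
The plan is to recognize the Teichm\"uller disc of a pillow-tiled surface as a holomorphic family of branched covers of $\mathbb{P}^1$ of precisely the kind treated in the previous proposition, and then to feed in the determinant-locus hypothesis to force the source surface to remain constant. First I would unwind the definition of a pillow-tiled surface: $\pi \colon X \to \mathbb{P}^1$ is a branched covering of degree $d$ whose branch locus is the set of four corners of the pillowcase, which after a M\"obius normalization I take to be $\{0,1,\infty,t_0\}$, and $q=\pi^* q_0$ is the pullback of the standard differential $q_0$ with simple poles at these four points. Since $\pi$ is affine in the flat coordinates attached to $q$ and $q_0$, the $\mathrm{GL}(2;\mathbb{R})$-action commutes with $\pi$: deforming $(X,q)$ by a matrix $M$ deforms the pillowcase by the same matrix, and a pillowcase transformed by $\mathrm{GL}(2;\mathbb{R})$ is again a pillowcase, with three corners renormalized to $0,1,\infty$ and the fourth corner moved to some $t$. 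Thus the Teichm\"uller disc $\mathbb{D}$ of $(X,q)$ maps equivariantly onto the Teichm\"uller disc of the pillowcase, which is parametrized by the position $t$ of the fourth corner; choosing a small sub-disc of $\mathbb{D}$ I may assume $t$ ranges over a ball $\mathrm{B}(t_0,\varepsilon)$ contained in $\mathbb{C}\setminus\{0,1,\infty\}$. Pulling back along $\pi$, the point of $\mathbb{D}$ lying over $t$ is a branched cover $\varphi_t \colon X_t \to \mathbb{P}^1$ with branch locus $\{0,1,\infty,t\}$, depending holomorphically on $t$.

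Next I would invoke the hypothesis that $\mathbb{D}$ lies in the determinant locus. By Proposition \ref{prop:forget}, the forgetful map $\mathcal{T}_{g,n}\to\mathcal{T}_g$ sends $\mathbb{D}$ to a point, so the underlying genus-$g$ Riemann surface $X_t$ is independent of $t$. I may therefore identify all the $X_t$ with a single surface $X$ and view $(x,t)\mapsto \varphi_t(x)$ as a holomorphic map $X \times \mathrm{B}(t_0,\varepsilon) \to \mathbb{P}^1$ with each $\varphi_t$ non-constant and $\mathrm{B}(\varphi_t)=\{0,1,\infty,t\}$; this is exactly condition (1) of the previous proposition. Condition (2), the constancy of the configuration of ramification points, follows from the combinatorial rigidity of the tiling: the monodromy of $\varphi_t$ is locally constant in $t$, so the ramification profile over each of the four corners persists, the ramified preimages of $0,1,\infty$ deform to ramified preimages of $0,1,\infty$, and those over the fourth corner track $t$ with unchanged multiplicities.

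With both hypotheses verified, applying the previous proposition yields $3(g-1)\le d$, which is the assertion. The step I expect to be the main obstacle is the very first one: making rigorous the claim that the $\mathrm{GL}(2;\mathbb{R})$-deformation of $(X,q,\pi)$ produces a genuine holomorphic family $\varphi_t$ with branch points exactly $\{0,1,\infty,t\}$ and constant ramification configuration. Concretely this requires checking the equivariance of $\pi$ under the flat deformation, and verifying that three of the four moving branch points can be renormalized to $0,1,\infty$ by a Möbius transformation depending holomorphically on $t$ so that only the fourth corner genuinely varies. Once this dictionary between $\mathbb{D}$ and the family $\varphi_t$ is set up correctly, the determinant-locus input via Proposition \ref{prop:forget} and the previous proposition complete the argument.
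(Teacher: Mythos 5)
Your argument is correct and is exactly the deduction the paper intends: the corollary is stated there without proof as an immediate consequence of the preceding proposition, and your route --- using Proposition \ref{prop:forget} to see that the underlying unmarked curve is constant along the disc, hence that the disc is a holomorphic family $\varphi_t \colon X \to \mathbb{P}^1$ with branch locus $\{0,1,\infty,t\}$ and locally constant monodromy, so that hypotheses (1) and (2) of the proposition hold --- supplies precisely the omitted details. The only point worth making explicit is the one you already build into your setup, namely that $\pi$ is branched over all four corners of the pillowcase, without which hypothesis (1) of the proposition (and in fact the nonvanishing of the section $s$ in its proof) would fail.
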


\begin{remark}
It is not possible to find an upper bound on the primitive degree $d$ in a given connected component of strata since there are infinitely many pillow-tiled surfaces with arbitrary large primitive degree.
\end{remark}
Let $(X, q)$ be a half-translation surface and $(Y, \pi)$ be an arbitrary finite covering of $X$ with branching locus $S\!$.
Assume that for any point $y$ in $Y$ above $S\!$, the ramification index of $\pi$ at $y$ is at least $2$. 
Then $\pi^* q$ is holomorphic, so that $\mathrm{B}_{\pi^*q}$ is non zero on $\mathcal{Q}(Y)$.  Thanks to Corollary \ref{cor:attention}, the Teichm\"uller disc of $(Y, \pi^*q)$ doesn't belong to the determinant locus. Using this observation, we can prove the following: 
\begin{corollary} \label{bien}
Let $(X, q)$ be a half-translation surface and $(Y, \pi)$ be a finite Galois covering of $X$ with branch locus $S\!$. If the Teichm\"uller disc of $(Y, \pi^*q)$ lies in the determinant locus, then at least one pole of $q$ does not belong to $S \!$.
\end{corollary}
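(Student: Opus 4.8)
The plan is to argue by contraposition: I will show that if \emph{every} pole of $q$ belongs to $S$, then the Teichm\"uller disc of $(Y, \pi^*q)$ cannot lie in the determinant locus. So assume $P \subseteq S$, where $P$ denotes the set of poles of $q$.

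The heart of the matter is to promote this assumption to the exact hypothesis of the observation preceding the statement, namely that every point of $Y$ lying above $S$ has ramification index at least $2$. This is precisely where the Galois assumption enters. Since $\pi$ is Galois, its group $G$ acts transitively on each fiber and the stabilizers of points in a fixed fiber are conjugate, so the ramification index is constant along fibers. For $x$ in $S$ this common index is $>1$ by definition of the branch locus, hence $\geq 2$. For a non-Galois covering this can fail, since a branch point may admit both ramified and unramified preimages; that is the one place where the argument genuinely uses the Galois hypothesis.

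Granting this, I would read off the local orders of $\pi^*q$. If $q$ has order $m$ at $x$ and $\pi$ has ramification index $e$ at a point $y$ above $x$, a one-line computation in the local coordinate $z=w^e$ shows that $\pi^*q$ has order $e(m+2)-2$ at $y$. Since $P\subseteq S$, each pole $x$ (where $m=-1$) lies in $S$ and thus has $e\geq 2$, giving order $e-2\geq 0$; at every holomorphic point ($m\geq 0$) the order is $\geq m\geq 0$ as well. Hence $\pi^*q$ is a \emph{holomorphic} quadratic differential on $Y$, which is exactly the conclusion recorded in the observation preceding the statement.

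Finally, since $\pi^*q$ is holomorphic and nonzero (it is the pullback of the nonzero form $q$ by a nonconstant map), we have $\mathrm{B}_{\pi^*q}(\pi^*q)=\int_Y |\pi^*q|>0$, so $\mathrm{B}_{\pi^*q}$ does not vanish on $\mathcal{Q}(Y)$. By Corollary \ref{cor:attention} the Teichm\"uller disc of $(Y,\pi^*q)$ is not contained in the determinant locus, contradicting our hypothesis; therefore at least one pole of $q$ does not belong to $S$. I expect the only delicate point to be the verification that the Galois condition forces uniform ramification above $S$, the remaining steps being the routine pole-order bookkeeping and a direct appeal to Corollary \ref{cor:attention}.
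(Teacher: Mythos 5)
Your proof is correct and is exactly the argument the paper intends: the corollary is stated as an immediate consequence of the preceding observation, and you supply the two pieces the paper leaves implicit, namely that the Galois hypothesis forces uniform ramification index $\geq 2$ over every branch point, and the local order computation showing $\pi^*q$ is then holomorphic, after which Corollary \ref{cor:attention} finishes the contrapositive. No gaps.
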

As a particular by-product, we get:
\begin{proposition} \label{prop:3points}
Let $(X, q, \pi)$ be a pillow-tiled surface such that $\pi$ is Galois. Then the Teichm\"uller disc of $(X, q)$ lies in the determinant locus if and only the branching locus of $\pi$ contains at most three points.
\end{proposition}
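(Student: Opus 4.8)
The plan is to prove both implications by translating the statement into the language of the holomorphic family of branched coverings studied in the previous proposition, and then invoking Corollary \ref{bien} for one direction and a rigidity argument for the other. First I would set up notation: write the pillowcase as $(\mathbb{P}^1, q_0)$, where $q_0$ has four simple poles at the four corners, three of which I normalise to $0, 1, \infty$, the fourth being a point $t_0 \in \mathbb{C}\setminus\{0,1,\infty\}$. Acting by $\mathrm{GL}(2;\mathbb{R})$ on the flat pillow varies the cross-ratio of the corners while keeping the tiling combinatorics — hence the monodromy of $\pi$ — fixed, so moving along the Teichm\"uller disc of $(X, q)$ yields precisely a holomorphic family $\pi_t \colon X_t \to \mathbb{P}^1$ of Galois coverings branched over $\{0, 1, \infty, t\}$ with constant ramification configuration, exactly as in the previous proposition. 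I will write $S \subseteq \{0, 1, \infty, t\}$ for the branching locus. By Proposition \ref{prop:forget}, membership of the disc in the determinant locus is equivalent to the forgetful map contracting it to a point, i.e. to the complex structure of $X_t$ being independent of $t$; this reformulation is what I would use throughout.

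For the forward implication I expect an immediate appeal to Corollary \ref{bien}. Since $\pi$ is Galois, applying that corollary to the base $(\mathbb{P}^1, q_0)$ and the covering $(X, \pi^* q_0)$ shows that at least one pole of $q_0$ avoids $S$; as the poles are exactly the four corners, $S$ then contains at most three points. This is really why the proposition is presented as a by-product.

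For the converse I would argue by rigidity. Assuming $|S| \le 3$, I would use that three points of $\mathbb{P}^1$ carry no moduli to produce a holomorphic family of M\"obius transformations $\phi_t \in \mathrm{PGL}(2;\mathbb{C})$ sending the (at most three, possibly $t$-dependent) branch points to a fixed subset of $\{0, 1, \infty\}$. The composite $\phi_t \circ \pi_t \colon X_t \to \mathbb{P}^1$ is then a family of branched coverings with fixed branch locus and fixed monodromy, which by the Riemann existence theorem form only a discrete set; hence $t \mapsto X_t$ is constant in moduli, the forgetful map contracts the disc, and Proposition \ref{prop:forget} places it in the determinant locus.

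The main obstacle, as I see it, lies entirely in the converse, and more precisely in making rigorous the idea that the obstruction is measured by the number of moving branch points: at most three branch points can always be frozen by the $\mathrm{PGL}(2;\mathbb{C})$-action on the base, whereas a fourth genuine branch point retains the cross-ratio as a true modulus along which $X_t$ must vary. The care required is to check that the normalising family $\phi_t$ can be chosen holomorphically and that the monodromy really is locally constant along the disc — both of which I would justify from the pillow-tiled structure and the preservation of the ramification configuration. By comparison the direct implication is routine, being essentially a one-line consequence of Corollary \ref{bien}.
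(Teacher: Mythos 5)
Your proof is correct and follows essentially the same route as the paper: the forward implication is exactly the one-line application of Corollary \ref{bien}, and the converse rests on the same rigidity principle. The only cosmetic difference is in the converse, where the paper argues more directly that, after normalising so the three (or fewer) branch points stay fixed along the disc, $X$ is literally the completion of one and the same unramified covering of $\mathbb{P}^1$ minus those points for all $t$ --- only the pulled-back differential varies --- so no appeal to the Riemann existence theorem or to discreteness of the set of coverings is needed; your normalising family $\phi_t$ and local constancy of the monodromy accomplish the same thing with a bit more machinery.
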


\begin{proof}
Let $q_{\mathrm{st}}$ be the standard meromorphic differential on $\mathbb{P}^1$ with four simple poles such that $q=\pi^* q_{\mathrm{st}}$. Then the branching locus of $\pi$ lies in the set of poles of $q_{\mathrm{st}}$. If $X$ is in the determinant locus, according to Corollary \ref{bien}, one of the poles of $q_{\mathrm{st}}$ is not a branching point of $\pi$. 
\par \medskip
Conversely, assume that the branching locus of $\pi$ has less than four points. If $\{z_1, z_2, z_3, z_4\}$ are the four poles of $q_{\mathrm{st}}$, let us assume that $z_4$ is not a branch point of $\pi$. The complex Teichm\"uller flow of $(\mathbb{P}^1, q_{\mathrm{st}})$ is of the form $(\mathbb{P}^1, q_{t})$ where $q_t$ has poles at $z_1$, $z_2$, $z_3$ and another point $z_4(t)$ such that $[z_1, z_2, z_3, z_4(t)]=t$. Let $\overset{\circ}{X}$ be the \textit{open} Riemann surface obtained by removing $\pi^{-1} \{z_1, z_2, z_3\}$. Then $\overset{\circ}{X}$ is an unramified covering of $\mathbb{P}^1 \setminus \{z_1, z_2, z_3 \}$. It follows that $(\overset{\circ}{X}\setminus \pi^{-1}(z_4(t)), \pi^* q_{t})$ parametrizes the Teichm\"uller disc of $(X, q)$ in $\mathrm{T}^* \mathcal{T}_{g, n}$ (where $n$ is the number of poles of $q$). This disc maps to $\{ X \}$ via the forgetful map $\mathcal{T}_{g, n} \rightarrow \mathcal{T}_{g}$. Thanks to Proposition \ref{prop:forget}, the Teichm\"uller disc of $(X, q)$ lies in the determinant locus.
\end{proof}
Let us now consider pillow-tiled surfaces arising as \textit{cyclic} coverings of the projective line. They are given by a combinatorial datum $(N, a_1, a_2, a_3, a_4)$ where $0 <a_i \leq N$, $\mathrm{gcd}\,(a_1, a_2, a_3, a_4, N)=1$ and $\sum_{i=1}^4 a_i \equiv 0 \, (N)$: the associated cyclic covering is the Riemann surface of the polynomial
\[
w^N-(z-z_1)^{a_1} (z-z_2)^{a_2}(z-z_3)^{a_3}(z-z_4)^{a_4}.
\]
In topological terms, if $(\gamma_i)_{1 \leq i \leq 4}$ are small loops around the $z_i$'s for $1 \leq i \leq 4$, then the kernel of the group morphism
\[
\pi_1(\mathbb{P}^1 \setminus \{z_1, z_2, z_3, z_4\}) \rightarrow \mathbb{Z}/N \mathbb{Z}
\]
given by $\gamma_i \rightarrow a_i$ defines a true cyclic covering of $\mathbb{P}^1 \setminus \{z_1, z_2, z_3, z_4\}$ of degree $N$, which extends to a branched cyclic covering of the projective line.
\par \medskip
In \cite[Thm. 35]{FMZ}, the authors prove that all Lyapunov exponents of the Teichm\"uller curve corresponding to a cyclic covering are $0$ if one of the integers $a_i$ equals $N$. 

\begin{proposition}
If $(X,q)$ is a pillow-tiled surface obtained by a cyclic covering of $\mathbb{P}^1$ with combinatorial datum $(N, a_1, a_2, a_3, a_4)$, then the Teichm\"uller disc of $(X, q)$ lies in the determinant locus if and only if one of the $a_i's$ equals $N$.
\end{proposition}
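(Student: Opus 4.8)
The plan is to deduce this statement directly from Proposition \ref{prop:3points}, which already characterizes when the Teichm\"uller disc of a \emph{Galois} pillow-tiled surface lies in the determinant locus: this happens precisely when the branching locus of $\pi$ contains at most three points. The only work is therefore to check that the cyclic covering $\pi$ is Galois and to translate the arithmetic condition ``$a_i = N$ for some $i$'' into the geometric condition ``the branching locus has at most three points''.

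First I would observe that the cyclic covering $\pi \colon X \to \mathbb{P}^1$ attached to the datum $(N, a_1, a_2, a_3, a_4)$ is by construction regular with Galois group $\mathbb{Z}/N\mathbb{Z}$: it is the covering associated to the kernel of the surjection $\pi_1(\mathbb{P}^1 \setminus \{z_1, z_2, z_3, z_4\}) \to \mathbb{Z}/N\mathbb{Z}$ sending each $\gamma_i$ to $a_i$, and this kernel is normal since the target group is abelian. Hence $\pi$ is Galois and Proposition \ref{prop:3points} applies verbatim to $(X, q, \pi)$.

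Next I would analyze the ramification. The local monodromy of $\pi$ around $z_i$ is the image of the small loop $\gamma_i$ in $\mathbb{Z}/N\mathbb{Z}$, namely $a_i \bmod N$. The covering is therefore unramified above $z_i$ exactly when $a_i \equiv 0 \pmod{N}$; since $0 < a_i \leq N$, this is equivalent to $a_i = N$. Consequently the branching locus of $\pi$ is the set $\{z_i : a_i \neq N\}$, and it has at most three elements if and only if at least one of the $a_i$ equals $N$. Combining this with Proposition \ref{prop:3points} yields the claimed equivalence in both directions.

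The argument is essentially a dictionary between the combinatorial datum and the topology of the covering, so I do not expect any genuine analytic obstacle; the single point deserving care is the normalization $0 < a_i \leq N$, which is exactly what makes the congruence ``$a_i \equiv 0 \pmod{N}$'' collapse to the unique value $a_i = N$ and thereby match the statement. Should one wish to avoid invoking Proposition \ref{prop:3points} as a black box, the same conclusion can be reached by the criterion of Proposition \ref{prop:forget}: one checks directly that the forgetful map sends the Teichm\"uller disc to a point precisely when $\pi$ is unramified over one of the four poles of $q_{\mathrm{st}}$, which again reduces to some $a_i = N$.
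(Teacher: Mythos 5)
Your proof is correct and follows essentially the same route as the paper: both reduce the statement to Proposition \ref{prop:3points} and then check that $\pi$ is unramified over $z_i$ exactly when $a_i = N$ (the paper phrases this via the ramification index $N/\gcd(N, a_i)$, you via the local monodromy $a_i \bmod N$, which is the same computation). Your explicit remark that a cyclic covering is Galois is a harmless addition that the paper leaves implicit.
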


\begin{proof}
Thanks to Proposition \ref{prop:3points}, it suffices to prove that the projection $\pi$ of the covering is branched at three points or less if and only if one of the $a_i's$ equals $N$. If $\{z_1, z_2, z_3, z_4\}$ are the four points defining the cyclic cover, the ramification index of $\pi$ at any point of $\pi^{-1} (z_i)$ is $\frac{N}{\mathrm{pgcd} \, (N, a_i)}$ qed.
\end{proof}

\subsection{Construction of invariant subvarieties} \label{winner}$ $ \par
In this section, we provide the precise statement underlying Theorem \ref{thm:yes} as well as its proof.
\par \medskip
Let $m_1, \ldots,  m_r$ and $k$ be positive integers such that $(\sum_{i=1}^r m_i)-k=-4$, and let $\mathcal{S}$ be the 
set of couples $(q, x_1, \ldots, x_{k-3})$ such that  such that $q$ is a meromorphic differential on $\mathbb{P}^1$ with simple poles at $0$, $1$ and $\infty$ and the $x_i$'s, and $q$ has $r$ zeroes of order $m_1, \ldots, m_r$. It is a smooth $\mathrm{GL(2; \mathbb{R})}$-invariant submanifold of $\mathrm{T}^* \mathcal{M}_{0, [k]}$ (where the bracket means that the points are ordered).
\par \medskip
Let us fix a covering $(Y, \pi)$ of $\mathbb{P}^1$ ramified over $0$, $1$ and $\infty$, and let $g$ be the genus of $Y$. 
Put 
\[
n=\# \left\{ y \in \,\pi^{-1} \{0, 1, \infty \} \textrm{ such that $\pi$ is unramified at $y$} \right\} + \mathrm{deg}\,(\pi) \times (k-3) \\
\]
We have a natural map
\[
\chi \colon  \mathcal{S} \rightarrow \mathrm{T}^*_{\mathrm{orb}} \mathcal{M}_{g, n}
\]
given by $
\chi(q)=(Y, \pi^* q),
$
where $\mathrm{T}^*_{\mathrm{orb}}$ denotes the \textit{orbifold} cotangent bundle.
\begin{theorem}
Let $\mathcal{W}$ be the image of $\chi$.
\begin{enumerate}
\item The map $\chi \colon \mathcal{S} \rightarrow \mathcal{W}$ is a holomorphic orbifold map, which is a local immersion. Besides, $\mathcal{W}$ is a suborbifold\footnote{By suborbifold, we mean as usually done in this theory "locally finite union of suborbifolds".} of the orbifold cotangent bundle of $\mathcal{M}_{g, n}$ of dimension $r+k-2$.
\par \smallskip
\item $\mathcal{W}$ is $\mathrm{GL}(2; \mathbb{R})$ invariant and lies in the determinant locus, and the projection of $\mathcal{W}$ by the map $\mathrm{T}^*_{\mathrm{orb}} \mathcal{M}_{g, n,} \rightarrow  \mathcal{M}_{g, n} \rightarrow \mathcal{M}_g$ is $\{ Y \}$.
\par \smallskip
\item The Lyapunov spectrum of $\mathcal{W}$ is fully degenerate.
\end{enumerate}
\end{theorem}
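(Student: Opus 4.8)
The plan is to establish (1) first, since (2) and (3) then follow quickly from Proposition \ref{prop:forget}. The map $\chi(q,x_1,\ldots,x_{k-3})=(Y,\pi^*q)$ is manifestly holomorphic: $Y$ and $\pi$ are fixed, $q\mapsto\pi^*q$ is linear, and the $n$ marked points (the poles of $\pi^*q$, namely the unramified preimages of $\{0,1,\infty\}$ together with the $\pi^{-1}(x_i)$) depend holomorphically on $q$ and on the $x_i$. To see that $\chi$ is a local immersion I would check injectivity of its differential: since $\pi$ is surjective, pullback $\pi^*$ is injective on meromorphic quadratic differentials, so if $d\chi$ kills a deformation then $\pi^*(\delta q)=0$ forces $\delta q=0$, while the marked points $\pi^{-1}(x_i)$ cannot move unless the $x_i$ do, forcing $\delta x_i=0$. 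For the dimension I would compute $\dim\mathcal{S}=r+k-2$ directly: over a fixed configuration the space of quadratic differentials on $\mathbb{P}^1$ with at most simple poles at the $k$ marked points has dimension $k-3$; prescribing the zero orders $m_1,\ldots,m_r$ (codimension $\sum_i(m_i-1)=(k-4)-r$) leaves dimension $r+1$, and adding the $k-3$ moduli of the configuration gives $\dim\mathcal{S}=r+k-2$. Since $\chi$ is a local immersion, $\dim\mathcal{W}=r+k-2$. That $\mathcal{W}$ is a suborbifold then follows from $\chi$ being a local immersion once the finite automorphisms (deck transformations of $\pi$ respecting the marking) are incorporated into the orbifold structure, local finiteness being part of the definition of suborbifold.

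For (2) I would first prove $\mathrm{GL}(2;\mathbb{R})$-equivariance of $\chi$. The flat surface $(Y,\pi^*q)$ is a covering of $(\mathbb{P}^1,q)$ that is unramified away from the singularities; because the $\mathrm{GL}(2;\mathbb{R})$-action is defined chartwise on the translation structure, it commutes with passing to a covering of fixed combinatorial type, so $M\cdot\chi(q,x_i)=\chi(M\cdot(q,x_i))$. As $\mathcal{S}$ is invariant, so is $\mathcal{W}=\chi(\mathcal{S})$. The decisive point is that the projection of $\mathcal{W}$ to $\mathcal{M}_g$ is the single point $\{Y\}$: since $\pi$ is branched only over the three fixed points $0,1,\infty$, the Riemann surface $Y$ is rigid --- it is determined by the monodromy of $\pi$ and the (fixed) positions of its branch points, so neither $q$ nor the extra poles $x_i$ alter its complex structure, and the first coordinate of $\chi$ is constantly $Y$. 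Combining invariance with this rigidity, the Teichm\"uller disc of every point of $\mathcal{W}$ lies in $\mathcal{W}$ and hence maps to a point under the forgetful map $\mathcal{M}_{g,n}\to\mathcal{M}_g$; by the equivalence of (i) and (ii) in Proposition \ref{prop:forget}, $\mathcal{W}$ lies in the determinant locus.

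For (3) I would invoke the equivalence of (i) and (iv) in Proposition \ref{prop:forget}, equivalently formula \eqref{LF}. By \cite{EM} the closed invariant locus $\mathcal{W}$ carries a natural invariant probability measure for which the Lyapunov exponents are defined and \eqref{LF} holds. Since every point of $\mathcal{W}$ lies in the determinant locus, the eigenvalues $\theta_1,\ldots,\theta_g$ of Forni's B-form vanish identically on $\mathcal{W}$, whence $\int_{\mathcal{W}}(\theta_1+\cdots+\theta_g)\,\mathrm{dV}=0$; as the $\theta_i$ are nonnegative this forces $\lambda_1=\cdots=\lambda_g=0$, i.e. the Lyapunov spectrum of $\mathcal{W}$ is fully degenerate.

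The main obstacle, and the genuine geometric content, is the rigidity used in (2): a branched covering of $\mathbb{P}^1$ over three fixed points is constant in $\mathcal{M}_g$. This is the structural explanation of the Forni--Matheus--Zorich phenomenon, paralleling the cyclic-cover case where some $a_i$ equals $N$ and the branch locus collapses to three points. The remaining effort is bookkeeping: confirming the dimension count and handling the orbifold automorphisms so that $\mathcal{W}$ is a bona fide suborbifold rather than merely an immersed image.
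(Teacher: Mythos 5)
Your proposal is correct and follows essentially the same route as the paper: equivariance of $\chi$ together with the rigidity of a covering of $\mathbb{P}^1$ branched over the three fixed points $0,1,\infty$ (the case $r=0$ being exactly Proposition \ref{prop:3points}) shows that $\mathcal{W}$ is invariant and projects to $\{Y\}$, and Proposition \ref{prop:forget} then gives membership in the determinant locus and the fully degenerate spectrum. The only local difference is in part (1), where the paper deduces that $\chi$ is an immersion from the finiteness of the fibers of its lift to $\mathrm{T}^*\mathcal{T}_{g,n}$ combined with affineness in period coordinates, while you check injectivity of the differential directly via injectivity of $\pi^*$ and supply an explicit dimension count for $\mathcal{S}$ that the paper leaves implicit; both arguments are sound.
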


\begin{proof}
Let $q$ be a point in $\mathcal{S}$, and $U$ be a small neighborhood of $q$ in $\mathcal{S}$. It is possible to lift locally $\chi$ to a smooth map $\widehat{\chi}$ from $U$ to $\mathrm{T}^* \mathcal{T}_{g, n}$, so that $\chi$ is a smooth orbifold map. 
\par \medskip
If $q_1, q_2$ are two elements in $U$ such that ${\chi}(q_1)={\chi}(q_2)$, then there exists $\varphi$ in $\mathrm{Aut}\, (Y)$ 
such that $\varphi^*(Y, \pi^*q_1)=(Y, \pi^* q_2)$. Thus the fibers of $\widehat{\chi}_{| U}$ are finite. But $\widehat{\chi}$ is affine in period coordinates, so that it is an immersion on $U$. 
\par \medskip
The $\mathrm{GL}(2; \mathbb{R})$-invariance of $\mathcal{W}$ is proved using the same argument as in Proposition \ref{prop:3points}, which corresponds to the particular case $r=0$. 
\par \medskip
Lastly, the fact that the Lyapunov spectrum of $\mathcal{W}$ is totally degenerate results from the implication (ii) $\Rightarrow$ (iv) in Proposition \ref{prop:forget}.

\end{proof}
We end this section by an example proving that the estimate $n \geq 2g-2$ obtained in Proposition \ref{prop:ekzz} is asymptotically sharp. Let $p$ be a prime number, and let $Y$ be a cyclic covering of order $p$ of the projective line fully ramified at three distinct points $z_1$, $z_2$ and $z_3$ obtained by taking the Riemann surface of the polynomial
\[
w^p-(z-z_1)^{a_1} (z-z_2)^{a_2} (z-z_3)^{a_3}
\] where $1 \leq a_1, a_2, a_3 \leq p-1$ and $a_1+a_2+a_3=p$. Then the genus of $Y$ is $\frac{p-1}{2}\cdot$ If $q$ is the pull-back of a meromorphic differential on the sphere with four simple poles at $z_1$, $z_2$, $z_3$ and another point $z_4$, then $\pi^* q$ has exactly $p$ poles, and three zeros of order $p-2$.
Therefore, we have
\[
\underbrace{n}_{p}=\underbrace{2g-2}_{p-3} + \underbrace{\sum_j \frac{m_j}{m_j+2}}_{3-\frac{6}{p}}+ \underbrace{\frac{\pi^2}{3} C_{\mathrm{area}}}_{\frac{6}{p}}
\]

\nocite{*}
\bibliographystyle{alpha}
\bibliography{bib}

\begin{thebibliography}{EKZ11b}

\bibitem[Aul13]{A}
David Aulicino.
\newblock {A}ffine {I}nvariant {S}ubmanifolds with {C}ompletely {D}egenerate
  {K}ontsevich-{Z}orich {S}pectrum.
\newblock {\em preprint}, arXiv 1302.0913, 2013.

\bibitem[BM10]{BM}
Irene~I. Bouw and Martin M{\"o}ller.
\newblock {T}eichm\"{u}ller curves, triangle groups, and {L}yapunov exponents.
\newblock {\em Ann. of Math. (2)}, 172(1):139--185, 2010.

\bibitem[CE13]{CE}
Jon Chaika and Alex Eskin.
\newblock Every flat surface is {B}irkhoff and {O}sceledets generic in almost
  every direction.
\newblock {\em preprint}, arXiv 1305.1104, 2013.

\bibitem[EKZ11a]{petitEKZ}
Alex Eskin, Maxim Kontsevich, and Anton Zorich.
\newblock Lyapunov spectrum of square-tiled cyclic covers.
\newblock {\em J. Mod. Dyn.}, 5(2):319--353, 2011.

\bibitem[EKZ11b]{grandEKZ}
Alex Eskin, Maxim Kontsevich, and Anton Zorich.
\newblock Sum of {L}yapunov exponents of the {H}odge bundle with respect to the
  {T}eichm{\"u}ller geodesic flow.
\newblock {\em preprint}, arXiv 1112.5872, 2011.

\bibitem[EM13]{EM}
Alex Eskin and Maryam Mirzakhani.
\newblock Invariant and stationary measures for the {SL}(2,{R}) action on
  moduli space.
\newblock {\em preprint}, arXiv 1302.3320, 2013.

\bibitem[FK92]{FK}
H.~M. Farkas and I.~Kra.
\newblock {\em Riemann surfaces}, volume~71 of {\em Graduate Texts in
  Mathematics}.
\newblock Springer-Verlag, New York, second edition, 1992.

\bibitem[FMZ11]{FMZ}
Giovanni Forni, Carlos Matheus, and Anton Zorich.
\newblock Square-tiled cyclic covers.
\newblock {\em J. Mod. Dyn.}, 5(2):285--318, 2011.

\bibitem[For02]{Fo}
Giovanni Forni.
\newblock Deviation of ergodic averages for area-preserving flows on surfaces
  of higher genus.
\newblock {\em Ann. of Math. (2)}, 155(1):1--103, 2002.

\bibitem[For06]{Fo2}
Giovanni Forni.
\newblock On the {L}yapunov exponents of the {K}ontsevich-{Z}orich cocycle.
\newblock In {\em Handbook of dynamical systems. {V}ol. 1{B}}, pages 549--580.
  Elsevier B. V., Amsterdam, 2006.

\bibitem[Kra81]{Kra}
Irwin Kra.
\newblock On the {N}ielsen-{T}hurston-{B}ers type of some self-maps of
  {R}iemann surfaces.
\newblock {\em Acta Math.}, 146(3-4):231--270, 1981.

\bibitem[M{\"o}l11]{M}
Martin M{\"o}ller.
\newblock Shimura and {T}eichm\"uller curves.
\newblock {\em J. Mod. Dyn.}, 5(1):1--32, 2011.

\bibitem[Voi07]{V}
Claire Voisin.
\newblock {\em Hodge theory and complex algebraic geometry. {I}}, volume~76 of
  {\em Cambridge Studies in Advanced Mathematics}.
\newblock Cambridge University Press, Cambridge, english edition, 2007.
\newblock Translated from the French by Leila Schneps.

\bibitem[Wri12]{W}
Alex Wright.
\newblock {S}chwarz triangle mappings and {T}eichm\"uller curves: {A}belian
  square-tiled surfaces.
\newblock {\em J. Mod. Dyn.}, 6(3):405--426, 2012.

\bibitem[Zor97]{Zo2}
Anton Zorich.
\newblock Deviation for interval exchange transformations.
\newblock {\em Ergodic Theory Dynam. Systems}, 17(6):1477--1499, 1997.

\bibitem[Zor99]{Zo3}
Anton Zorich.
\newblock How do the leaves of a closed {$1$}-form wind around a surface?
\newblock In {\em Pseudoperiodic topology}, volume 197 of {\em Amer. Math. Soc.
  Transl. Ser. 2}, pages 135--178. Amer. Math. Soc., Providence, RI, 1999.

\end{thebibliography}

\end{document}